\documentclass[pdflatex,sn-mathphys-num]{sn-jnl}% Math and Physical Sciences Numbered Reference Style
%%\documentclass[pdflatex,sn-mathphys-ay]{sn-jnl}% Math and Physical Sciences Author Year Reference Style
%%\documentclass[pdflatex,sn-aps]{sn-jnl}% American Physical Society (APS) Reference Style
%%\documentclass[pdflatex,sn-vancouver-num]{sn-jnl}% Vancouver Numbered Reference Style
%%\documentclass[pdflatex,sn-vancouver-ay]{sn-jnl}% Vancouver Author Year Reference Style
%%\documentclass[pdflatex,sn-apa]{sn-jnl}% APA Reference Style
%%\documentclass[pdflatex,sn-chicago]{sn-jnl}% Chicago-based Humanities Reference Style

%%%% Standard Packages
%%<additional latex packages if required can be included here>

\usepackage{graphicx}%
\usepackage{multirow}%
\usepackage{amsmath,amssymb,amsfonts}%
\usepackage{amsthm}%
\usepackage{mathrsfs}%
\usepackage[title]{appendix}%
\usepackage{xcolor}%
\usepackage{textcomp}%
\usepackage{manyfoot}%
\usepackage{booktabs}%
\usepackage{algorithm}%
\usepackage{algorithmicx}%
\usepackage{algpseudocode}%
\usepackage{listings}%
\usepackage{quiver}
\usepackage{tikz}
\usetikzlibrary{arrows}
%%%%

%%%%%=============================================================================%%%%
%%%%  Remarks: This template is provided to aid authors with the preparation
%%%%  of original research articles intended for submission to journals published 
%%%%  by Springer Nature. The guidance has been prepared in partnership with 
%%%%  production teams to conform to Springer Nature technical requirements. 
%%%%  Editorial and presentation requirements differ among journal portfolios and 
%%%%  research disciplines. You may find sections in this template are irrelevant 
%%%%  to your work and are empowered to omit any such section if allowed by the 
%%%%  journal you intend to submit to. The submission guidelines and policies 
%%%%  of the journal take precedence. A detailed User Manual is available in the 
%%%%  template package for technical guidance.
%%%%%=============================================================================%%%%

%% as per the requirement new theorem styles can be included as shown below
\theoremstyle{thmstyleone}%
\newtheorem{theorem}{Theorem}%  meant for continuous numbers
%%\newtheorem{theorem}{Theorem}[section]% meant for sectionwise numbers
%% optional argument [theorem] produces theorem numbering sequence instead of independent numbers for Proposition
\newtheorem{proposition}[theorem]{Proposition}% 

\newtheorem{lemma}[theorem]{Lemma}% 

\theoremstyle{thmstyletwo}%
\newtheorem{example}{Example}%
\newtheorem{remark}{Remark}%

\theoremstyle{thmstylethree}%
\newtheorem{definition}{Definition}%

\raggedbottom
%%\unnumbered% uncomment this for unnumbered level heads

%Commands added by the writers: 
\newcommand{\concept}[1]{{\fontseries{b}\selectfont #1}}

% %% Symbol shortcuts.
\def\R{\mathbb{R}}
\def\Z{\mathbb{Z}}

\def\X{\mathbb{X}}

\def\C{\mathcal{C}}

\def\G{\mathcal{G}}
\def\T{\mathcal{T}}
\def\B{\mathbf{B}}

%% Vectors.
\def\ve#1{\mathchoice{\mbox{\boldmath$\displaystyle\bf#1$}}
{\mbox{\boldmath$\textstyle\bf#1$}}
{\mbox{\boldmath$\scriptstyle\bf#1$}}
{\mbox{\boldmath$\scriptscriptstyle\bf#1$}}}
\newcommand\va{{\ve a}}
\newcommand\vb{{\ve b}}
\newcommand\vc{{\ve c}}

\newcommand\vs{{\ve s}}
\newcommand\vg{{\ve g}}
\newcommand\vt{{\ve t}}
\newcommand\vq{{\ve q}}
\newcommand\vu{{\ve u}}
\newcommand\vv{{\ve v}}
\newcommand\vx{{\ve x}}
\newcommand\vy{{\ve y}}
\newcommand\vz{{\ve z}}
\newcommand\zero{{\ve 0}}
\newcommand\gmip{\G_{MIP}}

%% Operator shortcuts.

\DeclareMathOperator{\Adj}{Adj}
\DeclareMathOperator{\Compl}{Compl}

\begin{document}

\title[Article Title]{Augmentation approaches for Mixed Integer Programming}

%%=============================================================%%
%% GivenName	-> \fnm{Joergen W.}
%% Particle	-> \spfx{van der} -> surname prefix
%% FamilyName	-> \sur{Ploeg}
%% Suffix	-> \sfx{IV}
%% \author*[1,2]{\fnm{Joergen W.} \spfx{van der} \sur{Ploeg} 
%%  \sfx{IV}}\email{iauthor@gmail.com}
%%=============================================================%%

\author[1]{\fnm{Justo Puerto}}\email{puerto@us.es}

\author[1]{\fnm{Jose Alberto Ruiz Alba}}\email{josealberto@gmail.com}

\affil[1]{ \orgname{IMUS},  \state{Sevilla}, \country{Spain}}

%%==================================%%
%% Sample for unstructured abstract %%
%%==================================%%

%%================================%%
%% Sample for structured abstract %%
%%================================%%
\abstract{This paper analyses the feasible sets structure of  general mixed integer linear programs (MIPs) and its relationship with the existence of a finite cardinality test set which can be applied in augmentation algorithms. We derive and characterize a computable, finite test set for MIPs which can be embedded in a finite augmentation algorithm. Several examples illustrate the structure of this set and its relationship with previous approaches in the literature.}

\keywords{Finite and universal test sets for MIP, Augmentation algorithms for mixed integer programs, Mixed Integer programs, Graver Test Set}

\pacs[MSC Classification]{90C11}

\maketitle

\section{Introduction}\label{sec1}

Improving directions algorithms are key in all areas of optimization, underlying many classical methods in both linear and nonlinear programming. Gradient-based methods in nonlinear optimization and the simplex method for linear programming are, in fact, two examples of improving directions approaches. In integer programming, augmentation (or primal) methods are also of current interest; see, for example, \cite{dLKS19, dLHK13}.
It is well known that universal test sets exist for continuous and pure integer linear programs, namely the set of circuits and the Graver basis, respectively. However, combining these two does not yield a finite universal test set for mixed-integer programs (MIPs), as shown by Hemmecke \cite{Hem03}. This raises the question of whether a finite universal test set exists for MIPs. In this paper, we answer this question positively by characterizing such a set.

Once a finite test set is found, an augmentation procedure can be applied: starting from a feasible solution, one iteratively improves it along an augmenting direction. This process enables one to find an optimal solution and to certify its optimality. While De Loera, Hemmecke and Lee \cite{dLHL15} established strong bounds on the number of augmentations required for linear and integer programs, their results do not extend to MIPs, where characterizing a finite set of augmenting directions is more challenging or remains unknown.

Several augmentation strategies for mixed-integer programming have been developed, notably by Hemmecke, Köppe and Weismantel \cite{KW03} and \cite{HKW}, and Hemmecke \cite{Hem03}. These works construct  “test sets” by exploiting geometric properties of polytopes. Since these seminal contributions, progress on this topic has been limited. However, recent works, such as Le Bodic et al. \cite{lBPPP18}, indicate a renewed interest. Other related studies include \cite{LPSX20, PWW20}, which elaborate on the distance between optimal solutions of mixed-integer programs and those of their linear relaxations. The first one focuses on pure integer programs, while the other addresses MIPs and improves the classic bound of \cite{CGST86}.

Our contribution is to develop an algebraic framework that yields an explicit finite universal test set for MIPs, together with an augmentation algorithm. We first study a restricted version of MIPs to identify the key structural issues, and then extend our construction to the general case via the notion of basic-integer solutions. Although the bounds in \cite{CGST86} do not apply in this setting, our results shed some light on the structure of Mixed Integer Programs.

The paper is organized as follows. Section 2 reviews known test sets for linear and integer programming and previous attempts for the mixed-integer case. Section 3 presents our construction in the restricted case and its extension to the general problem. Section 4 discusses the augmentation algorithm and complexity aspects, while Section 5 contains the main results: the finite universal test set for MIPs and the proofs of finiteness and universality. Section 6 concludes with remarks and perspectives for future research.

%% Review %%%%%%%%%%%%%%%%%%%%%%%%%%%%%%%%%%%%
\section{Review of Results for Linear and Integer Programming} \label{se:review}
We follow the structure and terminology of de Loera, Hemmecke and Lee~\cite{dLHL15} in the presentation of these results.
Throughout this paper, we will work with a \concept{mixed integer program} (MIP) of the form 
\begin{equation} 
\min \{ \vc^T \vx ~|~ A \vx = \vb, ~ \zero \le \vx \le \vu, ~ \vx \in \X \}.\label{eq:program}
\end{equation}
Here $A \in \Z^{m \times n}$, $\vb \in \Z^m$, $\vc \in \Z^n$, $\X = \R^{n_R} \times \Z^{n_I}$ for some suitable $d$ and $n_R+n_I = n$.
In general, this represents a MIP.  If $n_R=0$, it represents an IP, and if $n_I=0$, an LP.  
We will often partition $A=(A^R , A^I)$, $\vc = (\vc^R, \vc^I)^T$, $\vx = (\vx^R, \vx^I)^T$ and $\vu = (\vu^R, \vu^I)^T$ 
into parts corresponding to the real and integer parts respectively.
The rows of $A^R$ can be taken to be independent after some preprocessing, so we consider that $A^R$ has rank $m$.

For programs of the form (\ref{eq:program}), we use the convention that an \concept{improving direction} is a vector $\vt$ such that $\vc^T \vt < 0$, and that we improve a vector by adding a scalar multiple of $\vt$ from the current solution.  That is, we move from vector $\vz$ to a vector $\vz+\alpha \vt$ for some scalar $\alpha > 0$.

A \concept{test set} for a mixed integer programming is a set of vectors such that for any non-optimal feasible solution, such an improving direction exists.
However, test sets for particular instances of (\ref{eq:program}) depend heavily on the details of the feasible set we are trying to explore, as well as the objective function. That leads us to the more robust notion of a \concept{Universal Test Set} for a given $A$, that is a set, $\T(A)$, which contains test sets for all possible choices of $\vb, \vc$ and $\vu$.
Note that the size of each of these sets may be exponential in the input size of $A$.

There are some particular cases that have been deeply studied:
\begin{enumerate}
    \item[$\bullet$] For LP, $T(A)$ is exactly the \concept{circuits} of $A$.  These are the support-minimal non-trivial solutions to $Ax=0$, normalized to have integer components and no common divisor. We denote the set of circuits of $A$ as $\C(A)$. 

    \item[$\bullet$] For IP, $T(A)$ is the \concept{Graver basis } of $A$.  This constructed as a union of Hilbert bases of $Ax=0$ over the intersection of $Ax=0$ with the $2^n$ co-ordinate orthants, and denoted $\G(A)$.  
    Recall that the \concept{Hilbert basis } of a pointed cone is the minimal set of vectors generating the integer points of the cone as non-negative integer combinations.
\end{enumerate}

A universal test set for mixed integer programs is described by Hemmecke~\cite{Hem03}, called therein the \concept{MIP Graver Test Set}.  The MIP Graver Test Set associated is determined by the presenting matrix $(A^R, A^I)$, and is denoted $\gmip(A^R, A^I)$ (or simply $\gmip(A)$). It consists of two parts. The first part is formed by extending the circuits of $A^R$ by appending $n_I$ zeros to give them the right dimension.  We call these \concept{padded circuits}. The second part consists of the vectors $[\vx^R, \vx^I]^T \in \ker_{\X}(A)$ with $\vx^I \ne \zero$ which are minimal in the partial order $\sqsubseteq$ (we say that $\vx \sqsubseteq \vy$ if they lie in the same orthant, i.e.~corresponding entries of $\vx$ and $\vy$ never have opposite signs, and $|\vx_i| \le |\vy_i|$ for each $i= 1 \ldots n$). Unfortunately, Hemmecke showed in \cite[pp. 257-258]{Hem03} that this test set is not finite in general.

\section{Augmenting Sets for MIP} \label{se:augmip}
A critical difference between MIP and the pure LP and IP contexts is that, contrary to the well-understood structure of their test sets (circuits and Graver basis, respectively), it is not well-characterized the existence of usable finite test sets for the MIP case, see for instance the examples of~\cite{CGST86, Hem03}.  However, it is known that one can find two finite sets that may support augmentation algorithms, although with intricate constructions.

One way to build such a pair of sets, suggested by Cook et al.~\cite{CGST86}, is to observe that we can restrict ourselves to moving among rational points with a certain combinatorial structure. 

For simplicity, we will ignore the upper bounds $\vx \le \vu$ in \eqref{eq:program}.
In some situations these are, in any case, superfluous.  If they are important, they can be handled either by working with extended basic solutions (see for instance \cite[page 220]{bazaraa04}) where non-basic $x_i$ may be either $0$ or $u_i$, or by reformulating the system through additional non-negative variables $\vy=\vu-\vx$. 

We then work with the \concept{standard form}
\begin{equation}
\min \{ \vc^T \vx ~|~ A \vx = \vb, ~ \zero \le \vx, ~ \vx \in \X=\R^{n_R} \times \Z^{n_I} \}. \tag{MIP} \label{eq:standard}
\end{equation}
Any solution to (\ref{eq:standard}) has the form $\binom{\vx^R}{\vx^I}$, where $\vx^I \in \Z^{n_I}$.  Then $A^R \vx^R + A^I \vx^I = \vb$, and the non-integer part $\vx^R$ is a solution to the following linear system with integer coefficients and right hand side: 
\begin{equation} 
    A^R \vx^R = \vb - A^I \vx^I, \qquad \zero \le \vx^R, ~~ \vx^R \in \R^{n_R}\label{eq:slice}
\end{equation}
An optimal solution $\vx_* = \binom{\vx^R_*}{ \vx^I_*}$ to (\ref{eq:standard}) consists of an integer point $\vx^I_*$ and a solution $\vx^R_*$ to the LP problem (\ref{eq:slice}). Hence, the point $\vx^R_*$ can be chosen to be a \concept{basic} solution to (\ref{eq:slice}) in the sense of linear programming, i.e.~we can restrict its support to a set of linearly independent columns. 
We assume $n_R>m$. This leads us to the following concepts.

\begin{definition}
A \concept{basic-integer solution} $\vx_*=(\vx_* ^R, \vx_* ^I)^T$ to (\ref{eq:standard}), is a solution whose support restricted to the real-valued variables corresponds to a linearly independent set of columns of $A^R$.  \label{de:bis}
\end{definition}
Note that, by Cramer's rule, the entries of ${\vx^R_*}$ must then be integer multiples of the reciprocal of a subdeterminant of $A^R$.

\subsection{Restricted Mixed Integer Programs}
In this section we focus our efforts on finding a finite universal test set for a certain type of MIP. Later on, we will use this sets to build another type of finite augmenting set for a general program. We start with our standard MIP
\begin{equation*}
\min \{ \vc^T \vx ~|~ A \vx = \vb, ~ \zero \le \vx, ~ \vx \in \X=\R^{n_R} \times \Z^{n_I} \}
\end{equation*}
and we consider a basis $B\subset A^R$ fixed. As an optimal solution of the problem is a basic-integer solution with basis $B$ (as it is the only one we are allowing), the non-basic part of the solution is $\zero$. Therefore, we end up with the following program
\begin{equation}
\min \{ \vc_B^T \vx_B + (\vc^I)^T \vx^I ~|~ B\vx_B+A^I\vx^I = \vb, ~ \zero \le \vx, ~ \vx \in \X=\R^{n_R} \times \Z^{n_I} \}. \tag{RMIP}\label{eq:restricted_MIP}
\end{equation}
We call this program a \concept{Restricted Mixed Integer Program}. Note that $\vx_B$ has only $m$ components. We consider the remaining $n_R-m$ components to be zero, so $\vx= (\vx_B, \zero, \vx^I)^T$.

As $B$ is an invertible matrix, we can rewrite $\vx_B$ on terms of $\vx^I$ as 
\begin{equation}
    \vx_B= B^{-1}(\vb-A^I \vx^I).\label{eq: xBDespejao}
\end{equation}
This will be the key to find a test set as we will focus on solving the integer part as if it were an integer program.

Given a basis $B \subset A^R$, consider $\vx_1=(\vx_1^R,\vx_1^I)^T$ and $\vx_2=(\vx_2^R, \vx_2^I)^T$ two distinct feasible solutions of the problem (\ref{eq:restricted_MIP}) and let us denote by $(B^{-1}A^I)_{p}$ the $p$-th row  of $(B^{-1}A^I)$. We define 
$$ S_B^+(\vx_1, \vx_2):= \{ p: (B^{-1}A^I)_{p}(\vx_2^I-\vx_1^I)\ge 0\}, \quad S_B^-(\vx_1, \vx_2):=\{1,\ldots,m\} \setminus S_B^+(\vx_1, \vx_2).$$

Now, we consider the rational cone 
$$ C_B(\vx_1, \vx_2):=\{\vz\in \mathbb{Z}^{n_I}: (B^{-1}A^I)_{p} \vz\ge 0,\; p\in S_B^+(\vx_1, \vx_2),\;  (B^{-1} A^I)_{p} \vz\le 0, p\in S_B^-(\vx_1, \vx_2)\}.$$
The convex hull of $C_B(\vx_1, \vx_2)$ might not be pointed, and in particular may not be contained in an orthant, see Example~\ref{ex:unpointed} for one example of this. In conclusion, we do not get the ``positive sum property'' directly. However, we can still get this by using the strategy that is used to build Graver bases, that is, decomposing the cone into orthants and finding a Hilbert basis for each orthant. 
Formally, let $\mathbb{O}^k$ be the $k$-th orthant of $\mathbb{R}^{n_I}$, $k=1,\ldots,2^{n_I}$. We consider the pointed integer cones $C_B^k(\vx_1, \vx_2)  = C_B(\vx_1, \vx_2) \cap \mathbb{O}^k$, for all $k=1,\ldots,2^{n_I}$.  
Each such cone has a Hilbert basis $H^k(\vx_1, \vx_2)$, and the union of these Hilbert bases gives a finite set such that any integer point in $C_B(\vx_1, \vx_2)$ can be represented by a non-negative integer linear combination of elements in some $H^k(\vx_1, \vx_2)$.

\begin{remark}
This construction is finite and it can be applied to an arbitrary rational integer cone $C$.  Given such a $C$, let $H^k(C)$ be the Hilbert basis for $C \cap \mathbb{O}^k$. We define the \concept{conic Graver basis} of $C$, $\G(C)$, as $\bigcup_{k=1}^{2^{n_I}} H^k(C)$.  This coincides with the standard definition of a Graver basis $\G(A)$ when $C$ is, in fact,  the integer points of a subspace given by the kernel of a matrix $A$, i.e., $C= \ker(A) \cap \Z^n$.
\end{remark}

We denote $\G_B( \vx_1, \vx_2)$ to the conic Graver basis $\G(C_B(\vx_1, \vx_2))$ above. Then, we can write
$$ \vx_2^I=\vx_1^I+\sum_{k \in K} \alpha_k \vg_k,\; \alpha_k \in \mathbb{Z}_+,\; \vg_k \in \G_B( \vx_1, \vx_2),\; \forall  k \in K.$$

\begin{remark}
In this context, given two vectors $\va$ and $\vb$, $\va \sqsubseteq \vb$ means that $a_i b_i \geq 0$ and $|a_i|< |b_i|$ for all $i$. Note that, by construction, for all $\vg \in \G_B(\vx_1, \vx_2)$ we get $\vg \sqsubseteq \vx^I_2- \vx^I_1$ and $B^{-1} A^I \vg \sqsubseteq B^{-1} A^I (\vx^I_2-\vx_1^I)$. \label{Conformalidad}
\end{remark}

Observe that $\G_B(\vx_1, \vx_2)$ depends only on the sign pattern of $B^{-1}A^I(\vx_2^I-\vx_1^I)$.  We can index these sign patterns by subsets of $[m]:=\{1, ..., m\}$ and the basis we are using, defining $C^S_B$ for $S \subseteq [m]$ and $B$ an invertible submatrix of $A^R$ to be
$$ C^S_B:=\{\vz\in \mathbb{R}^{n_I}: (B^{-1}A^I)_{p}\vz\ge 0,\; p\in S,\; ~  (B^{-1}A^I)_{p}\vz\le 0, p \in [m] \setminus S\}.$$

\begin{proposition} 
Given a basis $B\subset A^R$, any point $\vz^I \in \Z^{n_I}$ which is a difference of integer parts of basic-integer solutions can be written as $\vz^I = \sum_{k \in K} \alpha_{k} \vg_{k}, \; \alpha_{k}\in \mathbb{Z}_+, \vg_{k} \in \G(C_{B}^{S_B})$ for some $S_B \subseteq [m]$.
\end{proposition}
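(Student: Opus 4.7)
The plan is to reduce the claim to a single application of the Hilbert basis property inside one orthant. First I would unpack the hypothesis: by definition of ``difference of integer parts of basic integer solutions'', there exist two basic-integer solutions $\vx_1,\vx_2$, both with the fixed basis $B$, such that $\vz^I=\vx_2^I-\vx_1^I$. Setting $S_B:=S_B^+(\vx_1,\vx_2)$, every row $(B^{-1}A^I)_p$ has, by the definition of $S_B^+$, exactly the sign required for membership in $C_B(\vx_1,\vx_2)$ when applied to $\vz^I$. Therefore $\vz^I\in C_B(\vx_1,\vx_2)$. Since the family $C_B^{S}$ was defined to depend only on the choice of $S\subseteq[m]$ (and on $B$, $A^I$), we have the identification $C_B(\vx_1,\vx_2)=C_B^{S_B}$.

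Next I would observe that $\vz^I\in\Z^{n_I}$ is a single integer vector, so its coordinate sign pattern places it inside one coordinate orthant $\mathbb{O}^k$ of $\R^{n_I}$. Consequently $\vz^I$ lies in the pointed integer cone $C_B^{S_B}\cap\mathbb{O}^k$. By the defining property of the Hilbert basis $H^k(C_B^{S_B})$ of that pointed cone, $\vz^I$ admits a representation
\[
\vz^I=\sum_{\ell\in\mathcal{L}}\alpha_\ell\,\vg_\ell,\qquad \alpha_\ell\in\Z_+,\quad \vg_\ell\in H^k(C_B^{S_B}).
\]
Finally, by the definition of the conic Graver base in the Remark, $H^k(C_B^{S_B})\subseteq \G(C_B^{S_B})$, and the stated decomposition follows.

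Since the technical content (finiteness of $\G(C_B^{S_B})$ and the orthant-decomposition trick that recovers the ``positive sum'' property for a non-pointed cone) was already set up in the discussion preceding the proposition, I do not anticipate a serious obstacle. The one step that deserves a line of care is precisely the passage to a single orthant: in the earlier construction one had to argue orthant by orthant because the convex hull of $C_B^{S_B}$ need not be pointed, but here that subtlety evaporates, since we are representing a single integer vector (lying in one orthant by its sign pattern) rather than an arbitrary integer point of $C_B^{S_B}$.
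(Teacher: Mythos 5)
Your proposal is correct and follows essentially the same route as the paper: choose $S_B=S_B^+(\vx_1,\vx_2)$ and invoke the orthant/Hilbert-basis construction set up just before the proposition. The paper's own proof is a two-line pointer to that construction, whereas you spell out the intermediate steps (membership of $\vz^I$ in $C_B^{S_B}$, locating it in a single orthant, and applying the Hilbert basis property of the pointed cone), which is a faithful expansion rather than a different argument.
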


\begin{proof}
Any $\vz^I \in \mathbb{Z}^{n_I}$ induces a partition of the rows of $B^{-1}A^I$ into $S_B^+\cup S_B^-$.  We take $S_B=S_B^+$ in the above construction. 
\end{proof}

\begin{definition} Given a basis $B \subseteq A^R$, we introduce the following sets.
    \begin{description}
\item[· \,$\G^{A,\vb}_{B}$ ]
 Let $\G^{A,\vb}_{B}$ be the union of the conic Graver bases $\G_B(\vx_1, \vx_2)$ formed from particular pairs of basic-integer solutions to (\ref{eq:restricted_MIP}) where $\vx_1$ and $\vx_2$ are basic-integer  solutions associated with the basis $B$. \label{de:gab}
 \item[· \,$\G_B^{A}$]
Let $\G_B^{A}$ be the union of the conic Graver bases $\G_B(\vx_1, \vx_2)$ over all cones $C_B(\vx_1, \vx_2)$ formed from particular pairs of basic-integer solutions to  (\ref{eq:restricted_MIP}) ranging over all integer vectors $\vb$.  \label{de:ga}
\item[· \,$\G^*_B$] 
Let $\G^*_B$ be the union of the conic Graver bases $\G(C_{B}^S)$ over all cones $C_{B}^S$ for $S \subseteq [m]$.\label{de:gstar}
 \end{description}

\end{definition}

\begin{remark}
    For better coherence with the following procedure for standard MIPs, we will consider $\vx_1$ to be a solution of the unrestricted problem. In this section this will be irrelevant as it only adds extra elements.
\end{remark}

We get that, for any $(A,\vb)$, we have $\G_B^{A,\vb} \subseteq \G_B^A \subseteq \G_B^*$. Now, we note that $\G_B^*$ is finite by construction.

We do not necessarily have that all $C_{B}^S$ correspond to an actual pair of feasible solutions $(\vx_1, \vx_2)$, but given such a pair we can always obtain a finite positive combination such that $\sum_{k \in K} \alpha_k \vg_k= \vx_2^I-\vx_1^I$ where $\vg_k \sqsubseteq \vx_2^I-\vx_1^I$ and  $\vg_k\in \G(C_B^S)$ for some $S$, for all $k \in K$. Now, using \eqref{eq: xBDespejao} we obtain:

\begin{equation*}
    \begin{split}
        (\vx_2)_B-(\vx_1)_B 
        &= B^{-1}(\vb-A^I \vx_2^I)-B^{-1}(\vb -A^I \vx_1) \\
        &=-B^{-1}A^I(\vx^I_2-\vx_1^I) \\
        &= -B^{-1}A^I(\sum_{k \in K}\alpha_k \vg_k) \\
        &= \sum_{k \in K}- \alpha_k B^{-1}A^I\vg_k.
    \end{split}
\end{equation*}
Thus, it follows that:
\begin{equation}
    \vx_2 - \vx_1 =
    \left[ \begin{array}{c}
        (\vx_2)_B- (\vx_1)_B  \\
        \zero \\ 
        \vx_2^I-\vx_1^I
    \end{array} \right]
    = \sum_{k \in K} \alpha_k 
    \left[ \begin{array}{c}
        -B^{-1}A^I \vg_k  \\
        \zero \\ 
        \vg_k
    \end{array} \right]\label{eq: aparecenT}
\end{equation}
for some $\vg_k \in \G_B(\vx_1, \vx_2)$.

\begin{definition}
    Given a point $\vg \in \Z^{n_I}$ and a basis $B \subset A^R$, we call the vector 
       $$ \left[ \begin{array}{c}
        -B^{-1}A^I \vg  \\
        \zero \\ 
        \vg
    \end{array} \right]$$
    a \concept{basic-integer direction} of basis $B$ and integer part $\vg$. Given a set of points $G \subseteq \Z^{n_I}$ we call $\T_B(G)$ to the set of all basic-integer directions of basis $B$ and integer part $\vg \in G$. \label{def:bid}
\end{definition}

\begin{remark}
    In our scenario, we consider $G$ as one of the sets of elements of conic Graver bases previously defined, so, in order to keep the notation simple, we will reduce the number of indices whenever possible. For instance, $\T_{B}(\G(\vx_1,\vx_2))$ will be shorten to $\T_B(\vx_1, \vx_2)$ where $B$ is a basis of $A^R$.
\end{remark}

It is straightforward to see that these basic-integer directions lie on $\ker_{\X}(A)$. Moreover, if $\vc^T \vx_2 < \vc^T \vx_1$, there is always an \concept{improving} basic-integer direction in $\T_B(\vx_1, \vx_2)$. 

The discussion above leads to the following result.
\begin{lemma}
    Given a basis $B\subset A$, consider two distinct basic-integer feasible solutions for \eqref{eq:restricted_MIP}, $\vx_1$ and $\vx_2$. There exist integer directions in $\T_B(\vx_1, \vx_2)$, i.e., vectors of the form 
        \[\vt=
        (\left[ \begin{array}{c}
        -B^{-1} A^I   \\
        \zero \\ 
        Id_{n_I}
        \end{array} \right]
        \vg)\] 
   for $\vg \in \G(\vx_1, \vx_2)$, verifying that $\vx_1+\vt$ is feasible. In addition, if $\vc^T\vx_2<\vc^T\vx_1$ then there always exists a vector $\vt_{0}$ of the previous form such that $\vc^T(\vx_1+\vt_{0})< \vc^T \vx_1$, in other words, there always exists an improving integer direction, hence, $\T_B^{A,b} := \T_B(\G_B^{A,b})$ is a test set for \eqref{eq:restricted_MIP}.\label{lemma: restricted_test_set}
\end{lemma}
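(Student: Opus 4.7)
The plan is to exploit the decomposition already established in equation~\eqref{eq: aparecenT}, which writes $\vx_2-\vx_1$ as a non-negative integer combination of vectors of the prescribed form $\vt_k=(-B^{-1}A^I\vg_k,\zero,\vg_k)^T$ with $\vg_k\in\G_B(\vx_1,\vx_2)$. Granted this, the proof reduces to two verifications: (i) that each individual summand $\vx_1+\vt_k$ is already feasible, and (ii) that at least one $\vt_k$ has strictly negative cost whenever $\vc^T\vx_2<\vc^T\vx_1$. The first is the only nontrivial step, and it is exactly the point at which the conformality properties noted in Remark~1 come in.

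For feasibility, I would argue as follows. By construction $\vt_k\in\ker(A)$, so $A(\vx_1+\vt_k)=\vb$ automatically, and the integer part of $\vx_1+\vt_k$ is $\vx_1^I+\vg_k\in\Z^{n_I}$. The only thing to check is non-negativity. For the integer coordinates, use $\vg_k\sqsubseteq \vx_2^I-\vx_1^I$: on components where $(\vx_2^I-\vx_1^I)_i\ge 0$ we have $\vg_{k,i}\ge 0$ so the sum remains $\ge\vx_{1,i}^I\ge 0$, while on components where $(\vx_2^I-\vx_1^I)_i<0$ we have $0\ge \vg_{k,i}\ge \vx_{2,i}^I-\vx_{1,i}^I$, so $\vx_{1,i}^I+\vg_{k,i}\ge \vx_{2,i}^I\ge 0$. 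For the basic coordinates, apply the analogous argument using the second conformality relation $B^{-1}A^I\vg_k\sqsubseteq B^{-1}A^I(\vx_2^I-\vx_1^I)$, noting that $B^{-1}A^I(\vx_2^I-\vx_1^I)=(\vx_1)_B-(\vx_2)_B$ by \eqref{eq: xBDespejao}; the same case split then yields $(\vx_1)_B-B^{-1}A^I\vg_k\ge\zero$. The non-basic components stay at $\zero$, giving feasibility.

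For the improving direction, take the decomposition $\vx_2-\vx_1=\sum_{k\in K}\alpha_k\vt_k$ and apply $\vc^T$ to both sides to obtain
\[
\vc^T(\vx_2-\vx_1)=\sum_{k\in K}\alpha_k\,\vc^T\vt_k.
\]
If $\vc^T\vx_2<\vc^T\vx_1$, the left-hand side is strictly negative, and since $\alpha_k\ge 0$ at least one index $k_0\in K$ must satisfy $\vc^T\vt_{k_0}<0$. Combined with the feasibility of $\vx_1+\vt_{k_0}$ from the previous paragraph, this produces the required improving integer direction $\vt_{k_0}\in \T_B(\vx_1,\vx_2)$.

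Finally, to conclude that $\T_B^{A,\vb}=\T_B(\G_B^{A,\vb})$ is a test set for \eqref{eq:restricted_MIP}, let $\vx_1$ be any basic-integer feasible solution that is not optimal and let $\vx_2$ be an optimal basic-integer solution (which exists since \eqref{eq:restricted_MIP} has an integer solution when any does and the basis $B$ is fixed). Apply the preceding argument to the pair $(\vx_1,\vx_2)$: the resulting $\vt_{k_0}$ lies in $\T_B(\vx_1,\vx_2)\subseteq \T_B^{A,\vb}$ by Definition~\ref{de:gab} and witnesses the test-set property. The main obstacle, as anticipated, is the careful componentwise verification of non-negativity in the feasibility step; everything else is essentially bookkeeping on the decomposition already derived.
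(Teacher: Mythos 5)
Your proposal is correct and follows essentially the same route as the paper's own proof: both start from the decomposition in \eqref{eq: aparecenT}, use the conformality relations $\vg_k \sqsubseteq \vx_2^I-\vx_1^I$ and $B^{-1}A^I\vg_k \sqsubseteq B^{-1}A^I(\vx_2^I-\vx_1^I)$ to verify non-negativity of $\vx_1+\vt_k$ componentwise, and extract the improving $\vt_{k_0}$ by applying $\vc^T$ to the non-negative combination. Your feasibility step is organized slightly more cleanly (a direct two-case comparison against $\vx_2$ rather than summing over all $\vg_i$), but it rests on exactly the same ingredients.
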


\begin{proof}
    As in \eqref{eq: aparecenT}, we can write $\vx_2-\vx_1$ as $\sum_{k \in K} \alpha_k \vt_k$. From this, there must exist a $k_0$ such that $c^T(\vt_{k_0})<0$. 

    Let us check the feasibility of $\vx_1+\vt_k$ for all $k \in K$ with $\alpha_k \neq 0$.
    We know that 
    $$\vt_k= [-B^{-1}A^I, \zero, Id_{n_I}]^T \cdot \vg_k$$
    where $\vg_k$ satisfies $\vg_k \sqsubseteq \vx_2^I-\vx_1^I$ and $B^{-1}A^I \vg_k \sqsubseteq B^{-1}A^I (\vx^I_2-\vx^I_1)$. With this in mind, we can prove that 
    $$\vx_1+ \vt_k= 
    \left[ \begin{array}{c}
    B^{-1}(\vb- A^I \vx_1^I)   \\
    \zero \\ \vx_2^I
    \end{array} \right] +
    \left[ \begin{array}{c}
    -B^{-1}A^I   \\
    \zero \\ Id_{n_I}
    \end{array} \right] \vg_k 
    \geq \zero.$$
    In fact, the proof of the integer part and the continuous part are identical. We will show that $B^{-1}(\vb-A^I\vx_1^I)-B^{-1}A^I\vg_k \geq \zero.$

    First, if $B^{-1}A^I \vg_k \leq \zero$ the result follows as $(\vx_1)_B= B^{-1}(\vb -A^I \vx_1^I) \geq \zero$. Let us assume that $B^{-1}A^I \vg_k \nleq \zero$, which means that there is a $j\in [m]$ such that $(B^{-1}A^I \vg_k)_j>0$. Thanks to \ref{Conformalidad}, this tells us that $(B^{-1}A^I(\vx_1^I-\vx^I_2))_j$ and $(B^{-1}A^I \vg_{k'})_j$ for all ${k'} \in K$ with $\alpha_i \neq 0$ are non-negative numbers. As we know, 
    $$(B^{-1}A^I \vx_2^I)_j = (B^{-1}A^I \vx_1^I)_j + \sum_{k' \in K} (B^{-1}A^I \vg_{k'})_j,$$
    hence $(B^{-1}A^I\vx_2)_j \geq (B^{-1}A^I (\vx_1^I+\vg_k))_j$,
    and by 
    hypothesis we have that ${(B^{-1} \vb)_j \geq (B^{-1}A^I \vx_2^I)_j}$, so 
    $$(B^{-1} \vb)_j \geq (B^{-1}A^I \vx_2^I)_j \geq (B^{-1}A^I(\vx_2+\vg_k))_j $$
    and the result follows. 
\end{proof}

\begin{remark}
    The above result can be seen as a consequence of corollary of \cite[\S\S  3.3]{Hem03} when the real matrix is square. Nevertheless, the simplicity of this restricted problem allows us to prove it as if it were an integer program using Graver bases.
\end{remark}

As a consequence of Lemma \ref{lemma: restricted_test_set} and the fact that $\mathcal{G}_B^{A,\vb}\subset \mathcal{G}_B^*$ for all $\vb$, $\mathcal{T}_B^*$ is actually an universal test set for the Restricted Mixed Integer Programs considered in this section. 

Once we have understood the structure of the test sets for RMIPs, we address a similar question for a general MIP in the next section.

\subsection{Mixed Integer Programs}
Let us consider a general MIP given by
\begin{equation}\tag{\ref{eq:standard}}
\min \{ \vc^T \vx ~|~ A \vx = \vb, ~ \zero \le \vx, ~ \vx \in \X=\R^{n_R} \times \Z^{n_I}\}
\end{equation}
where $B_1, ..., B_r$ are all the bases of $A^R$. 
Given two basic-integer solutions, we define the cone $C(\vx_1, \vx_2)$ as the intersection of all the cones $C_{B_i}(\vx_1, \vx_2)$ for all $i \in \{1, ..., r\}$:
$$C(\vx_1, \vx_2) := \big\{ \vz \in \Z^{n_I} \, | \,  \vz \in C_{B_i}(\vx_1, \vx_2) \text{ for all } i \in \{1,..., r\} \big\}.$$
This cone is non empty, as $\vz= \vx_2-\vx_1$ belongs to every single cone $C_{B_i}(\vx_1, \vx_2)$.
As we did in previous sections, we can compute this cone by just looking at the sign of $B_i^{-1}A^I(\vx_2-\vx_1)$ for all $i$. More generally, given a list $S=\{S_1, ..., S_r\}$ with $S_i \subseteq [m]$ for every $i \in \{1,..., r\}$, we define
$$C^S := \big\{ \vz \in \Z^{n_I} \, | \, \vz \in C^{S_i}_{B_i} \text{ for all } i \in \{1,..., r\} \big\}.$$

\begin{definition}
    We define the following sets:
    \begin{description}
        \item[· \,$\G^{A,b}$]
Let $\G^{A,b}$ be the union of all conic Graver bases of the cones $C(\vx_1, \vx_2)$ for every pair of basic-integer feasible solutions of \eqref{eq:standard}. In addition, we denote $\T^{A,b}$ to the union of all the sets $\T_{B}(\G^{A,b})$ for all basis $B \subset A^R$. \label{def: gab}

\item[· \,$\G^{A}$]  Let $\G^{A}$ be the union of all sets of the form $\G^{A,b}$ varying $\vb$ over $\Z^{n_I}$. In addition, we denote $\T^{A}$ to the union of all the sets $\T_{B}(\G^{A})$ for all basis $B \subset A^R$.\label{def: ga}

\item[· \,$\G^*$] 
    Let $\G^*$ be the union of all conic Graver bases of the cones $C^S$ for all list $S=[S_1, ..., S_r]$ with $S_i \subseteq [m]$. In addition, we denote $\T^*$ to the union of all the sets $\T_B(\G^*)$ for all basis $B\subset A^R$. \label{def: g*}
\end{description}
\end{definition}

Unfortunately, $\T^*$ is clearly not a test set for \eqref{eq:standard} as we do not have a clear way of moving between basic-integer solutions from different bases with the same integer part. One can try to avoid it by adding the (padded) circuits to our set. However, this strategy is not enough as it will be shown in the following examples:

\begin{example} \label{ex: lone_basis}
We consider the following program with two real and one integer variables: 
$$\{\min \vc^T \vx ~|~ x_1 - x_2 + 2 x_3 = 3, x_1,~ x_2 \in \R_+, ~ x_3 \in \Z_+\}.$$
We have $A^R = [1 ~ -1]$ and $A^I = [2]$.  Then, there are two bases in $A^R$, call them $B_1=1$ and $B_2=-1$ corresponding to the choice of the first and second columns respectively.  There is a pair of solutions where $B_1$ is basic, namely $\vx_0=(3,0,0)^T$ and $\vx_1=(1,0,1)^T$ and a family of them corresponding to $B_2$ that we name $\vx_b=(0,2b-3,b)^T$ for $b\in \{ 2,3,\ldots \}$.

We can easily see that $\G^{A,b}=\G^A=\G^*= \{\pm1 \}$ by only looking at the cones $C(\vx_0, \vx_1)$ and $C(\vx_1, \vx_0)$:
$$C(\vx_0, \vx_1)= \{z \in \Z ~|~ 2z \geq 0, -2z \leq 0\}= \Z_{\geq 0},$$
$$C(\vx_1, \vx_0)= -C(\vx_0, \vx_1)= \Z_{\leq 0}.$$
Therefore, $\T_{B_1}(\G^*)=\{\pm (-2,0,1)^T\}$ and $\T_{B_2}(\G^*)=\{ \pm(0,2,1)^T\}$.

The problem appears when we try to move from a basic-integer solution of basis $B_1$ to a basic-integer solution of basis $B_2$. For instance, in order to move from $\vx_2$ to $\vx_1$ we need the direction $(1,-1,-1)^T$ which is not in $\T^*$ neither it is a circuit. In this case, it is a combination of both.

\begin{table}[ht]
    \centering
    \caption{Feasible set of the example \ref{ex: lone_basis}. Unnamed points are infeasible .}
    \label{T: E5.3}
    
    \begin{tabular}{|c|c|}
        \hline
        \textbf{$B=1$} & \textbf{$B=-1$} \\
        \hline
        $\vx_0 = \left[ \begin{array}{c}
        3   \\0 \\ 0
        \end{array} \right]  \rule{0pt}{12pt}$ &  
        $ \left[ \begin{array}{c}
        0  \\-3 \\ 0
        \end{array} \right] $ \\  
        \hline
        $\vx_1 = \left[ \begin{array}{c}
        1   \\0 \\ 1
        \end{array} \right]  \rule{0pt}{12pt}$ &  
        $\left[ \begin{array}{c}
        0  \\-1 \\ 1
        \end{array} \right]  $ \\  
        \hline
        $\left[ \begin{array}{c}
        -1   \\0 \\ 2
        \end{array} \right] \rule{0pt}{12pt} $ &  
        $\vx_2 = \left[ \begin{array}{c}
        0  \\1 \\ 2
        \end{array} \right]$ \\  
        \hline
        $ \begin{array}{c}
       .  \\. \\ .
        \end{array}$ &  
        $\begin{array}{c}
        .  \\. \\ .
        \end{array}$ \\  
        \hline
    \end{tabular}
\end{table}
\end{example}

\begin{example} 
A variation of the previous Example~\ref{ex: lone_basis}, with two real and two integer variables. Consider now a problem of the form $$\{\min \vc^T \vx ~|~ x_1 - x_2 + 2 x_3 + 2x_4 = 1, x_1, x_2 \in \R_+, x_3, x_4 \in \Z_+\}.$$
We have $A^R = [1 -1]$ and $A^I = [2, 2]$. There are only two feasible bases, $B_0=1$ and $B_1=-1$.  Solutions include $\vx_0=(1,0,0,0)^T$ and $\vx_{u,v}=(0,2u+2v-1,u,v)^T$ for $u,v \geq 0$.

If we do some calculations we get:
\begin{equation*}
    \begin{split}
        C(\vx_{u,v}, \vx_0)= & \{\vz \in \Z^2 \, | \quad (2,2)^T \vz \leq \zero, \, -(2,2)^T\vz \geq \zero \} \\
        C(\vx_0, \vx_{u,v}) = & -C(\vx_{u,v}, \vx_0)\\
   \end{split}
\end{equation*}
Hence, $\G(C(\vx_{u,v}, \vx_0))=\{ (-1,0)^T, (0,-1)^T, \pm(1,-1)^T\}=-\G(C(\vx_0, \vx_{u,v}))$. Then, 
$$\big \{\pm(1,0)^T, \pm(0,1)^T, \pm(1,-1)^T \big \} \subseteq \G^{A,b}.$$
Furthermore, it can be checked that 
$$ \big \{\pm(1,0)^T, \pm(0,1)^T, \pm(1,-1)^T \big \} = \G^{A,b}= \G^A= \G^*.$$ Thus,
\begin{equation*}
    \begin{split}
        \big \{ \pm(-2, 0, 1, 0)^T, \pm(-2, 0, 0, 1)^T,
       \pm (0, 2, 1, 0)^T, \pm(0, 2, 0, 1)^T, \pm(0, 0, 1, -1)^T\big \} = \T^{A,b}
    \end{split}
\end{equation*}
This example is a generalization of the previous one. It presents the same problematic, $\C(A^R) \cup \T^*$ is not a test set. \label{ex:unpointed}
\end{example}

The above examples suggest that we should look beyond the current framework (circuits and basic-integer directions). With this in mind, we introduce a new concept.

\begin{definition}
    Given a standard Mixed Integer Program \eqref{eq:standard}, a pair $(\T, \C)$ is called a \concept{Double Test Set} if for any feasible non-optimal solution $\vx$ of \eqref{eq:standard} there exists a direction $\vt_0 \in \T$ such that there exist $\vs_1, ..., \vs_\ell \in \C$ and non negative real numbers $\beta_1,\ldots,\beta_\ell$, verifying that $\bar \vx =\vx + \vt_0+ \sum_{i=1}^\ell \beta_i \vs_i$ is a feasible solution which improves $\vx$, i.e.,  $\vc^T \bar \vx< \vc^T \vx$. We say that a Double Test Set is \concept{universal} if it is a Double Test set for any choice of $\vb$ and $\vc$.\label{def: DTestSet}
\end{definition}

Observe that combining elements from the two sets to form a universal test set may result in an infinite set since the number of linear combinations of elements of $\mathcal{C}$ may not be finite.
\begin{theorem}
    The pair $(\T^*, \C(A^R))$ is a finite universal double test set for a standard MIP problem.\label{Th: DTS}
\end{theorem}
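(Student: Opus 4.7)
The finiteness half is immediate from the definitions. The set $\G^*$ is a finite union (over the finitely many sign-pattern lists $S=(S_1,\ldots,S_r)$) of the Hilbert bases of the pointed cones $C^S\cap\mathbb{O}^k$; since $A^R$ has only finitely many bases $B$, the set $\T^*=\bigcup_B\T_B(\G^*)$ is finite, and the circuit set $\C(A^R)$ is finite by the standard matroid argument. So essentially all the work is in verifying the augmentation property.

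For that, let $\vx$ be non-optimal and let $\vx^*$ be an optimal basic-integer solution with $\vc^T\vx^*<\vc^T\vx$, and write $\vd^I=\vx^{*I}-\vx^I\in\Z^{n_I}$. I would begin with a preliminary LP step: pivot $\vx$, using only (padded) circuits of $A^R$, to a basic-integer solution $\vx'$ with $\vx'^I=\vx^I$ and $\vc^T\vx'\le\vc^T\vx$; this move contributes to the $\sum_i\beta_i\vs_i$ part of the decomposition and leaves the integer block untouched. If $\vd^I=\zero$, the optimum $\vx^*$ lies in the same LP slice as $\vx'$ and further circuits already produce the strict improvement (this is the ``trivial'' case and one handles the book-keeping requirement that $\vt\in\T^*$ by adjoining $\zero$ to $\T^*$, or equivalently by taking some $\vt\in\T^*$ together with a move along its negative).

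The substantive case is $\vd^I\ne\zero$. Let $S$ be the sign pattern determined by the vectors $B^{-1}A^I\vd^I$ as $B$ ranges over bases of $A^R$, so that $\vd^I\in C^S\cap\Z^{n_I}$. By the very definition of $\G^*$ one has a conformal decomposition
\begin{equation*}
\vd^I=\sum_{k}\alpha_k\vg_k,\qquad \alpha_k\in\Z_+,\ \vg_k\in\G^*,\ \vg_k\sqsubseteq\vd^I,\ B^{-1}A^I\vg_k\sqsubseteq B^{-1}A^I\vd^I.
\end{equation*}
Taking $B$ to be the basis of $\vx'$, each summand produces a candidate direction
\begin{equation*}
\vt_k=\begin{bmatrix}-B^{-1}A^I\vg_k\\\zero\\\vg_k\end{bmatrix}\in\T_B(\G^*)\subseteq\T^*.
\end{equation*}
The conformality computation copied verbatim from the proof of Lemma~\ref{lemma: restricted_test_set} shows that the $B$-block of $\vx'+\vt_k$ remains non-negative, and a further LP pivot (once again a non-negative combination of circuits) restores the non-basic real entries to zero inside the LP slice of integer part $\vx'^I+\vg_k$ without worsening the cost. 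Summing the cost contributions reproduces the objective change of the full move from $\vx'$ to the basic-integer solution $\vx^{**}$ with basis $B$ and integer part $\vx^{*I}$; since that total is bounded above by $\vc^T(\vx^*-\vx')<0$, there exists $k_0$ with $\vc^T\vt_{k_0}<0$. Taking $\vt=\vt_{k_0}$ and collecting the circuits used in the two LP adjustments then yields the required $\bar\vx=\vx+\vt+\sum_i\beta_i\vs_i$.

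The main obstacle is the hidden claim that $\vx^{**}$ is itself feasible, i.e.\ that $B^{-1}(\vb-A^I\vx^{*I})\ge\zero$, which may fail when the optimum $\vx^*$ uses a different basis $B^*$. I would address this by not insisting on $B$ fixed throughout: after each partial Graver move one re-pivots using circuits to a basis that certifies feasibility of the next intermediate integer part $\vx'^I+\vg_{k_1}+\cdots+\vg_{k_j}$, which is possible because each such point lies in the LP-feasibility region thanks to the conformal relations $B^{-1}A^I\vg_k\sqsubseteq B^{-1}A^I\vd^I$ holding for every basis $B$. This ``basis-changing'' refinement of Lemma~\ref{lemma: restricted_test_set} is the real technical content of the theorem, while the selection of the single improving $\vt_{k_0}\in\T^*$ is just the familiar Graver averaging trick.
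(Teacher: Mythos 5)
Your finiteness argument is fine, but the augmentation half has two genuine gaps, and it also diverges from the paper's route in a way that matters. The paper does not decompose the difference to an optimal solution at all: it starts from Hemmecke's universal test set $\G_{MIP}(A)$, which already hands you a single vector $\vd=(\vq,\vz)$ such that $\vx+\vd$ is \emph{feasible and improving}, and then the only thing left to prove is that the integer part $\vz$ already lies in $\G^{A,b}\subseteq\G^*$. That is done via the lemma on $f(A,\vz)=(\vz,B_1^{-1}A^I\vz,\ldots,B_r^{-1}A^I\vz)$: membership of $\vd$ in $\G_{MIP}(A)$ forces $f(A,\vz)$ to be $\sqsubseteq$-irreducible, and an element of $C^S$ is in the conic Graver basis exactly when no $\vz'$ satisfies $f(A,\vz')\sqsubseteq f(A,\vz)$. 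Once $\vz\in\G^*$, the corresponding $\vt\in\T^*$ differs from $\vd$ only in the real block, and that difference is absorbed by $\langle\C(A^R)\rangle$. Feasibility and improvement are inherited from $\vd$, so neither of the issues below ever arises.

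The first gap in your argument is the feasibility of $\vx^{**}$ and of the intermediate points, which you flag but do not resolve. Conformality gives you that $B^{-1}A^I\bigl(\vx^I+\sum_{j\le J}\alpha_j\vg_j\bigr)$ is sandwiched componentwise between $B^{-1}A^I\vx^I$ and $B^{-1}A^I\vx^{*I}$, hence $B^{-1}\bigl(\vb-A^I(\cdot)\bigr)$ at an intermediate integer part is sandwiched between a vector that is nonnegative (for the basis $B$ of $\vx'$) and one that need not be (since $\vx^*$ may use a different basis $B^*$), and symmetrically for $B^*$. Being between a feasible value and an infeasible one does not give feasibility, and there may be \emph{no} basis certifying LP-feasibility of an intermediate integer slice; your proposed ``basis-changing refinement'' is therefore an unproven claim, not a fix. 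Example~\ref{ex: lone_basis } is exactly the situation where single-basis moves hit infeasible points. The second gap is the cost argument: your averaging trick telescopes to $\vc^T(\vx^{**}-\vx')$, where $\vx^{**}$ carries the basis $B$ of $\vx'$ but the integer part of $\vx^*$. There is no reason for $\vc^T\vx^{**}\le\vc^T\vx^*$, nor even $\vc^T\vx^{**}<\vc^T\vx'$, so the existence of $k_0$ with $\vc^T\vt_{k_0}<0$ does not follow; and the claim that the subsequent circuit repair proceeds ``without worsening the cost'' is likewise unsupported (the paper explicitly notes that recovering feasibility may require worsening the solution). Note also that Definition~\ref{def: DTestSet} only asks that $\vx+\vt+\sum_i\beta_i\vs_i$ improve $\vx$, not that $\vt$ itself be improving, so insisting on $\vc^T\vt_{k_0}<0$ is both too strong and not established.
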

The reader may note that finite cardinality of the double test set does not imply a finite number of improving directions due to the infinite number of linear combinations of circuits that may be required.

In order to prove this theorem we use the following construction. 
Consider the set:
\begin{equation*}
    \begin{split}
        \gmip(A)= & \big \{(\vq,\zero)^T, \mbox{ for } \vq \in \C(A^R) \big \} \, \, \cup \\
        & \big \{(\vq, \vz)^T \in \ker_{\X}(A):  \, \mbox{ there is no } (\vq', \vz')^T \in \ker_\X(A) \mbox{ such that } (\vq', \vz') \sqsubseteq (\vq, \vz) \big \}
    \end{split}
\end{equation*}
    
By \cite[Lemma 12]{Hem03}, this is a universal test set for \eqref{eq:standard}. Nevertheless, this set can be infinite, as shown in example \cite[pp. 257-258]{Hem03}. To avoid this drawback, first we define the projection onto the integer part:
\begin{equation*}
    \begin{split}
        \phi (\R^{n_R}\times \Z^{n_I})  &\longrightarrow \Z^{n_I} \\
        (\vq, \vz) & \mapsto \vz.
    \end{split}
\end{equation*}
In addition, for any polyhedron 
$$P_{\vu}:=\{ \vx \in \R^{n_R} \, | \, A^R \vx = -A^I \vu \}$$
we consider the relationship $(\vg, P_{\vg}) \sqsubseteq (\vz, P_{\vz})$ that holds if
 $\vg \sqsubseteq \vz$ and $P_{\vz}= P_{\vg} + P_{\vz- \vg}$. (The interested reader can find  some results on how to determine if $P_{\vg} \sqsubseteq P_{\vz}$ in \cite{HKW}.)

Then, we consider a finite set $G$ such that for every element $\vz \in \phi(\ker_{\X}(A))$ either $\vz \in G$ or if $\vz \not \in G$, $\vz$ can be written as $\vz = \sum_{i=1}^k \alpha_i \vg_i$  with $\alpha_i \in \Z_{>0}$ for some elements $\vg_1, ..., \vg_k \in G$ such that $(\vg_i, P_{\vg_i}) \sqsubseteq (\vz, P_{\vz})$ for all $i\in \{1,..., k\}$. One can see this set as an analogue to a Gröbner basis where a monomial $\vx^\vv$ reduces $\vx^\vu$ if $(\vv, P_\vv) \sqsubseteq (\vu, P_\vu)$. By \cite[Proposition 1]{Hem03} the set $\phi(\gmip(A))$ is contained in $G$.

With this machinery, we are in position to state a technical lemma that will be used in the proof of Theorem \ref{Th: DTS}.

\begin{lemma}[Corollary 4.\cite{Hem03}]
    Given a matrix $A=(A^R, A^I)$, let $B_1, ..., B_r$ be all the bases of $A^R$. For any $\vz \in \Z^{n_I}$, consider the function 
    $$f(A,\vz) = ( \vz, B_1^{-1}A^I\vz, ..., B_r^{-1}A^I\vz).$$
     Then, $f(A, \vz') \sqsubseteq f(A, \vz)$ implies $(\vz', P_{\vz'}) \sqsubseteq (\vz, P_\vz)$.
\end{lemma}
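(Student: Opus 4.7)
The hypothesis $f(A, \vz') \sqsubseteq f(A, \vz)$ packages together two pieces of information. Reading off the first block gives $\vz' \sqsubseteq \vz$ immediately, which is already half of the desired conclusion. The remaining work is to show the Minkowski decomposition $P_\vz = P_{\vz'} + P_{\vz-\vz'}$, using the componentwise conformal information $B_i^{-1}A^I\vz' \sqsubseteq B_i^{-1}A^I\vz$ for every basis $B_i$ of $A^R$.

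The inclusion $P_{\vz'} + P_{\vz-\vz'} \subseteq P_\vz$ is immediate by linearity of $A^R$ and $A^I$, together with closure of the nonnegative orthant under addition. My plan is thus to concentrate on the reverse inclusion $P_\vz \subseteq P_{\vz'} + P_{\vz-\vz'}$.

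For this, I would use the Minkowski--Weyl decomposition. Every $\vx \in P_\vz$ can be written as
\[ \vx = \sum_j \lambda_j \vv_j + \sum_k \mu_k \vr_k, \qquad \lambda_j \ge 0,\ \sum_j \lambda_j = 1,\ \mu_k \ge 0, \]
where the $\vv_j$ range over vertices of $P_\vz$ and the $\vr_k$ over its extreme rays. Each vertex $\vv_j$ arises from an invertible submatrix $B_i \subseteq A^R$ with $-B_i^{-1}A^I\vz \ge \zero$, namely $-B_i^{-1}A^I\vz$ in the $B_i$-coordinates and $\zero$ elsewhere; the extreme rays depend only on $A^R$ and not on $\vz$. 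The crucial geometric observation is that the conformal hypothesis transports every such basis across the splitting: if $B_i^{-1}A^I\vz \le \zero$, then $B_i^{-1}A^I\vz' \sqsubseteq B_i^{-1}A^I\vz$ forces $B_i^{-1}A^I\vz' \le \zero$ componentwise, and subtracting a conformally smaller vector preserves signs, so $B_i^{-1}A^I(\vz-\vz') \le \zero$ as well. Thus the same basis $B_i$ yields vertices $\vv_j'$ of $P_{\vz'}$ and $\vv_j''$ of $P_{\vz-\vz'}$, and linearity of $B_i^{-1}$ gives $\vv_j = \vv_j' + \vv_j''$. Setting $\vx_1 := \sum_j \lambda_j \vv_j' + \tfrac12 \sum_k \mu_k \vr_k$ and $\vx_2 := \sum_j \lambda_j \vv_j'' + \tfrac12 \sum_k \mu_k \vr_k$ then produces $\vx_1 \in P_{\vz'}$, $\vx_2 \in P_{\vz-\vz'}$, and $\vx_1 + \vx_2 = \vx$.

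The main obstacle I anticipate is the bookkeeping around vertex indexing: one must check that the bases realizing vertices of $P_\vz$ are \emph{exactly} the ones the conformal hypothesis transports to vertices of $P_{\vz'}$ and $P_{\vz-\vz'}$, and that these latter polyhedra are nonempty whenever $P_\vz$ is. Both statements reduce to componentwise sign-tracking through the $\sqsubseteq$ relation on $B_i^{-1}A^I\vz$; the argument is local in the coordinates of each basic block, which is precisely why Hemmecke packages all bases into the single function $f$.
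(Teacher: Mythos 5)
The paper never proves this lemma: it is imported verbatim as Corollary~4 of \cite{Hem03}, so there is no internal proof to compare against, and your argument has to stand as a from-scratch reconstruction. It does, essentially, provided one reads $P_{\vu}$ as the paper surely intends, namely $P_{\vu}=\{\vx\in\R^{n_R}\;:\;A^R\vx=-A^I\vu,\ \vx\ge\zero\}$ --- the displayed definition omits the nonnegativity constraints and writes $\R^{n_I}$ for $\R^{n_R}$, and without $\vx\ge\zero$ the Minkowski identity is a triviality about translates of $\ker(A^R)$ and the lemma has no content. Granting that reading, your steps check out: the first block of $f$ gives $\vz'\sqsubseteq\vz$; the inclusion $P_{\vz'}+P_{\vz-\vz'}\subseteq P_{\vz}$ is linearity plus closure of the orthant under addition; and the sign transport for the reverse inclusion is exactly right, since $B_i^{-1}A^I\vz\le\zero$ together with $B_i^{-1}A^I\vz'\sqsubseteq B_i^{-1}A^I\vz$ forces both $B_i^{-1}A^I\vz'\le\zero$ and $B_i^{-1}A^I(\vz-\vz')\le\zero$, so the same basis produces feasible basic solutions of $P_{\vz'}$ and $P_{\vz-\vz'}$ that sum to the corresponding vertex of $P_{\vz}$. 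Two small additions would close it completely. First, the ``obstacle'' you flag at the end is not one: the transported points need only be \emph{feasible} in $P_{\vz'}$ and $P_{\vz-\vz'}$, not vertices, because the final step only uses convexity of those sets; and nonemptiness takes care of itself, since if $P_{\vz}=\emptyset$ the easy inclusion already forces one of the summands to be empty. Second, state explicitly that $P_{\vz}$ is pointed (it lies in the nonnegative orthant), which is what licenses the vertex-plus-recession-cone form of Minkowski--Weyl and guarantees that a nonempty $P_{\vz}$ has at least one vertex realized by some basis $B_i$.
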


We note that, based on the lemma above, an element $\vz \in C^S$ is in the conic Graver basis of $C^S$ if and only if there is no other element $\vz' \in \Z^{n_I}$ such that $f(A, \vz')\sqsubseteq f(A, \vz)$.

With this result, we can ensure the adequate structure of our Double Test Set: for every augmentation $(\vq,\vz)^T$ of $\gmip(A)$, we know that $\vz \in G$ so we can find a direction $\vt \in \T^*$ such that the difference between the element $(\vq, \vz)^T$ and $\vt$ belongs to $\langle\C(A^R) \rangle$.

\begin{proof}[Proof of Theorem \ref{Th: DTS}]
    Given the set $\gmip(A)$ and a feasible non-optimal solution $\vx$ of \eqref{eq:standard}, there always exists a vector $\vt=(\vq, \vz)^T \in \gmip(A)$ such that $\bar \vx= \vx + \vt$ is feasible and an improvement over $\vx$. Our goal is to prove that there exists a combination of vectors of our finite set $(\T^*, \C(A^R))$ that allows us to move from $\vx$ to $\bar \vx$. It is sufficient to prove that $\vz \in \G^{A,b}$ as we can solve any LP problem on matrix $A^R$ using the padded circuits.
    
    If $\vz= \zero$ then $\vt \in \C(A^R)$ and we consider $\vt_0= \zero$.

    Let us suppose that $\vz \neq \zero$. By the previous lemma, we know that $f(A,\vz)$ is $\sqsubseteq$-irreducible by elements of $\phi(\ker_\X(A))$, as if not, there would exist a $\hat \vz$ which would satisfy $(\hat \vz, P_{\hat \vz})\sqsubseteq (\vz, P_{\vz})$ and $\vz$ would not be in $G$. Now, by contradiction, assume $\vz \not \in \G^{A,b}$, but $\vz \in C(\bar \vx, \vx)$ as $\vz = \bar \vx^I - \vx^I $, thus, we get that there is an element $\vz' \in \G^{A,b}$ such that $f(A,\vz') \sqsubseteq f(A,\vz)$. This is a contradiction since $\vz' \in \phi(\ker_\X(A))$.
\end{proof}

\section{The augmentation algorithm\label{se:aug-algo}}
In this section we present 
the natural algorithm to solve mixed integer programming problems of the form
\begin{equation*}
    \tag{MIP}
    \min \{ \vc^T \vx ~|~ A \vx = \vb, ~ \zero \le \vx, ~ \vx \in \X \}
\end{equation*}
provided that a double test set of the form $(\T, \C)$ is available. 

As expected for an augmentation algorithm, we need to start with an initial solution. Let us consider $\vx_0$, a feasible solution of the problem. 

In fact, at any step of the algorithm, we may assume that our solution is the optimal solution of the problem on the feasible set given by \eqref{eq:slice}, as we can solve this program using padded circuits.

\begin{algorithm}
    \caption{Augmentation Algorithm}\label{alg: AA}
    \begin{algorithmic}[1]
        \State \textbf{input} a MIP problem like \eqref{eq:program}, a double test set $(\T, \C)$ and  $\vx_0$ a feasible solution of \eqref{eq:program}.
        \State \textbf{output} an optimal solution for \eqref{eq:program}.
        \While{there are $\vt \in \T$ and $\vs_1, ..., \vs_\ell$ such that $\vx_0 +\vt + \sum_{i=1}^\ell \beta_i \ell_i$ is feasible and improvement over $\vx_0$}
            \State $\vx_0 \gets \vx_0+\vt+\sum_{i=1}^\ell \beta_i \vs_i$
        \EndWhile
    \end{algorithmic}
\end{algorithm}

 \begin{remark}
    If $\T^{A,b} \subseteq \T$, for a given solution $\vx$ of basis $B$, we do not need to search in the whole set, as with the set $\T$ we are focusing on modifying the integer part so we can later recover feasibility using $\C(A^R)$ (or a LP-solver). In addition, all the sets $\T^{A,b}_B$ share the same integer parts, therefore, if $\vx_0$ satisfies the optimality condition we can restrict ourselves to just work with elements $\vt$ in $\T_B^{A,b}$ such that $\vc^T \vt < \zero$. The reason is that all the directions of $\T_B^{A,b}$ do not change the optimality condition, hence $\vx_0 + \vt$ also verifies it. If this new point is feasible we have found an improvement, if not we need to recover the feasibility by worsening the solution using some padded circuits.
\end{remark}

\subsection{About the complexity and the number of steps}

It is known that, in general, MIPs and IPs are NP-hard, so we cannot expect to solve these problems in polynomial time, nor even using augmenting algorithms. In fact, we are far from polynomiality, as calculating each Hilbert basis gives us a lower bound on the complexity that is exponential in the input size of $A$.

Furthermore, the main bottleneck in this approach is the combinatorial structure of the problem. We do not know if a basis is the optimal one without exploring it. This tells us that the number of steps of this Augmentation Algorithm is, in the worst case, exponential as it will need to visit all feasible bases.

For that reason, we focus on the algebraic part of the algorithm. Nowadays, with the current state of the art, it is not feasible to calculate the set $\G^{A,b}$ nor $\G^A$ without exploring all pair of feasible solutions, hence, we are led to work with the potentially much bigger set $\G^*$, which , on the other hand, is easier to compute.

\section{Building a finite test set\label{se:finite-universal}}
In the previous two sections we have presented a finite double \concept{universal} test set. The reason why we need two sets instead of one is that in order to find the correct combination of circuits and integer directions we need to fix $\vb$. In this case, we can find a finite test set that may not be universal.

There is not a unique way to find such a set. For instance, one can consider all the possible differences between all basic-integer feasible solutions. The downside of this method is obvious: we need to calculate every single basic-integer feasible solution. However, we can do it in an alternative way.

Consider a basic-integer feasible solution $\vx_0$ of \eqref{eq:standard} of basis $B_0$ and a finite set $G$ containing $\phi(\gmip(A))$. If $\vx_0$ is not an optimal solution there must exist a direction ${\vt=(\vq, \vz)^T \in \gmip(A)}$ such that $\bar \vx= \vx_0 + \vt$ is feasible and an improvement over $\vx_0$. If $\bar \vx$ is not a basic-integer solution, we know that we can find one that improves $\bar \vx$ using padded circuits, therefore, we assume $\bar \vx$ to be a basic-integer solution. Let $\vt_0$ be
the basic-integer direction of integer part $\vz$ and basis $B_0$. Our objective is to find the difference $\vt- \vt_0$. By construction, $\vt- \vt_0$ lies in the ideal $\langle \C(A^R) \rangle$ (padded). We can find a set that contains such a difference in an algorithmic way. For that, we need the following definition:

\begin{definition}
    Given a basic-integer solution $\vx$ of basis $B\subset A^R$, we call the \concept{set of adjacent} (or neighbors) basic-integer solutions to $\vx$, and we denote it $\Adj(\vx)$, to the set of basic-integer solutions with the same integer part as $\vx$ and any basis $B' \subset A^R$ that shares $m-1$ columns with $B$.
\end{definition}

\begin{algorithm}
    \caption{Completion Procedure: using circuits}\label{alg: Buscando q}
    \begin{algorithmic}[1]
        \State \textbf{input} $\vx_0=(\vx^R_0, \vx^I_0)^T$ a basic-integer feasible non optimal solution of \eqref{eq:standard}, $\vg$ the integer part of an improvement $(\vq, \vg)^T$ of $\vx_0$, and the set of circuits $\C(A^R)$.
        \State \textbf{output} $T$, a set containing $(\vq, \vg)^T$.

        \State $\vt_0 \gets $ a basic-integer direction of integer part $\vg$ and a basis of $\vx_0$.
        \State $T \gets \{ \vt_0 \}$
        \State $\bar \vx \gets \vx_0 + \vt_0$
        \State $C \gets \Adj(\bar \vx)$

        \While{$C \neq \emptyset$}
            \State $\bar \vx \gets$ an element in $C$
            \State $C \gets C \setminus \{\bar \vx \}$
            \State $\vt= \bar \vx - \vx_0$
            \If {$\vt \not \in T$} 
                \State $T \gets T \cup \{\vt \}$
                \State $C \gets C \cup \Adj(\bar \vx)$
            \EndIf
        \EndWhile
    \end{algorithmic}
\end{algorithm}
\begin{lemma}
    Assuming the situation of the previous description, Algorithm \ref{alg: Buscando q} terminates and satisfies its specifications.
\end{lemma}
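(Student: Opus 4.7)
The plan is to exploit the fact that every vector the algorithm ever considers after line~5 represents a basic integer solution sitting on the same ``integer slice'' as $\vx_1 = \vx_0 + \vt_0$. Because the circuits $\vs \in \C(A^R)$ are padded with zeros in the integer coordinates, every element of $\SP(\vx_1, \vs)$ has integer part equal to $\zero$. Hence, after any number of iterations, the current $\vx_1$ still has integer part $\vx_0^I + \vg$. Thus all candidate basic integer solutions explored lie in the LP slice
\begin{equation*}
    \mathcal{P}_\vg = \{ \vx^R \in \R^{n_R} : A^R \vx^R = \vb - A^I(\vx_0^I + \vg),\ \vx^R \ge \zero \}.
\end{equation*}
The number of basic feasible solutions of $\mathcal{P}_\vg$ is bounded by the (finite) number of $m$-subsets of columns of $A^R$.

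For termination, observe that the set $T$ is strictly monotone (once $\vt$ enters $T$ the test $\vt \not\in T$ blocks re-insertion), and by the previous paragraph $T$ injects into the finite set of basic feasible solutions of $\mathcal{P}_\vg$ via $\vt \mapsto \vx_0 + \vt$. New elements can be added to $C$ only when $T$ grows, and each iteration removes one element from $C$; so after finitely many enlargements $C$ cannot be replenished and the while-loop ends.

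For correctness, we must show $(\vq,\vg)^T \in T$ at termination. By hypothesis $(\vq,\vg)^T \in \G_{MIP}$, so $\vx^\star := \vx_0 + (\vq,\vg)^T$ is a basic integer solution with integer part $\vx_0^I + \vg$; that is, $\vx^{\star R}$ is a basic feasible solution of $\mathcal{P}_\vg$. Likewise $\vx_1^R$ is one. The pivot graph of an LP is connected under elementary circuit pivots: any two basic feasible solutions of $\mathcal{P}_\vg$ are linked by a sequence $\vx_1 = \vy_0, \vy_1, \ldots, \vy_k = \vx^\star$ where $\vy_{i+1} - \vy_i = \alpha_i \vs_i$ for some circuit $\vs_i \in \C(A^R)$ and $\alpha_i$ such that $\vy_{i+1}$ is basic and feasible, i.e.\ $\alpha_i \vs_i \in \SP(\vy_i, \vs_i)$. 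The algorithm essentially performs a breadth/first search of this graph: whenever a new basic feasible solution $\vy$ is discovered (i.e.\ the direction $\vy-\vx_0$ is freshly inserted into $T$), all $S$-vectors at $\vy$ are added to $C$. An induction on the pivot distance from $\vx_1$ shows every $\vy_i - \vx_0$ is eventually placed in $T$; in particular $\vx^\star - \vx_0 = (\vq,\vg)^T$ is.

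The chief technical point is the identification of the algorithm's ``pivot moves'' (elements of $\SP(\vx_1, \vs)$) with the edges of the classical LP pivot graph of $\mathcal{P}_\vg$, and the invariant that $\vx_0 + \vt$ is always a basic feasible solution of $\mathcal{P}_\vg$ whenever $\vt$ enters $T$. Once these are established, termination is an immediate pigeonhole argument and correctness reduces to the standard connectivity of the pivot graph; no subtle convergence analysis is needed.
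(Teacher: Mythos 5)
Your proof is correct and follows essentially the same route as the paper's: termination because every direction entering $T$ corresponds to a basic integer solution on the fixed slice with integer part $\vx_0^I+\vg$, of which there are only finitely many, and correctness because the padded circuits connect all basic solutions on that slice to $\vx_1$. You merely make explicit, via the connectivity of the LP pivot graph and the breadth-first-search induction, what the paper asserts in a single line.
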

\begin{proof}
    First, we are going to show that the algorithm terminates. For that, note that all possible directions $\vt$ generated by the algorithm have the same integer part. With this in mind and using the definition of the set of adjacents to a basic-integer solution, we only get directions that move $\vx_0$ to a new basic-integer solution. As those sets are finite and we only branch each solution once, the procedure terminates and outputs a finite set. 

    It remains to prove that $(\vq, \vz)^T \in T$. Again, $\vx_0 + \vt_0$ gives a vector basic-integer solution that has the same integer part as $\bar \vx=\vx_0 + (\vq, \vz)^T$. By construction, the padded circuits transform $\vx + \vt$ into all the basic-integer solutions with the same integer part, in particular, $\bar \vx=\vx_0 + (\vq, \vg)^T$.
\end{proof}

To summarize, we have a strategy that given certain set $G$ and a feasible solution $\vx_0$ returns a set which contains an improving direction for a specific $\eqref{eq:standard}$ (if possible). If we repeat this approach with all possible basic-integer solutions the result is a finite test set for $\eqref{eq:standard}$.

\subsection{Degeneracy}
In the third section we show that the set $\T^* \cup \C(A^R)$ is not, in general, a test set for a MIP. Here, we elaborate a bit more on this issue. We have the following scenario: there is a solution $\vx_0$ of basis $B_0$ which is not optimal, hence, there is a direction $\vt=(\vq, \vz)^T$ such that $\bar \vx = \vx_0 + \vt$ is an improvement over $\vx_0$. We assume that $\vx_0$ cannot be improved using padded circuits and that $\vx_0 + \vt_0$ is not feasible, where $\vt_0$ is the basic-integer direction of integer part $\vz$ and basis $B_0$. 

\begin{definition}
    Given a point $\vx_0= (\vx_0^R, \vx_0^I)^T$ that verifies $A \vx = \vb$, we say that it is \concept{degenerated} if $\vx^R$ is a basic solution of the program $A^R \vx = \vb -A^I \vx_0^I$ for more than one basis $B \subset A^R$.
\end{definition}

In the following, we consider all basic-integer solutions to be non-degenerated. If a basic-integer solution is degenerated then there are more than one basic-integer direction associated with it and we consider the solution to be written in the basis that gives the direction that is actually relevant. With this in mind, we assume that all degenerated points $(\vx_0^R, \vx_0^I)$ verify $\vx_0^I \not \in \Z^{n_I}$. 
Moreover, in what follows, we call dual simplex directions any direction obtained by an iteration of the dual simplex algorithm that moves between non-feasible points that satisfy the optimality condition of the problem.

In the proof of the following result, we use these dual simplex directions.  Thus, given an integer part $\vx^I$, our primal program will be 
$$ \min \{\vc_R^T \vx \quad |  \quad A^R \vx = \vb - A^I \vx^I, \, \vx \geq \zero\}.$$

\begin{lemma}
    Under the hypothesis of the above discussion, if $\vt = (\vq, \vz) =\vt_0 + \sum_{k=1}^\ell \lambda_k \vs_k$ where $\vs_k$ are Dual Simplex directions, then $\vt$ can be written as a convex combination of basic-integer directions of integer part $\vz$. \label{L: CombConv}
\end{lemma}

\begin{proof}
    Let $\vx'_i$ be $\vx_0 + \vt_0 + \sum_{k=1}^i \lambda_k \vs_k$ for $i \in \{0, ..., \ell\}$ where $\vx'_\ell= \bar \vx$. By hypothesis, all the points $\vx_i'$ for $ i \in \{0,..., \ell-1 \}$ are non feasible. Let $B_i$ be the basis of each $\vx'_i$. This leads to the following diagram
   \[\begin{tikzcd}
	{\vx_0'} & {\vx_1'} & {...} & {\vx_{\ell-1}'} & {\bar \vx} \\
	\\
	{\vx_0} & {\vx_1} & {...} & {\vx_{r-1}} & {\vx_\ell}
	\arrow["{{\vs_1}}", from=1-1, to=1-2]
	\arrow["\begin{array}{c} \begin{array}{c} \\ \vs_2 \end{array} \end{array}", from=1-2, to=1-3]
	\arrow[from=1-3, to=1-4]
	\arrow["{{\vs_\ell}}", from=1-4, to=1-5]
	\arrow["{\vt_0}"', from=3-1, to=1-1]
	\arrow[from=3-1, to=3-2]
	\arrow["{{\vt_1 }}"', from=3-2, to=1-2]
	\arrow[from=3-2, to=3-3]
	\arrow[from=3-3, to=3-4]
	\arrow["{{\vt_{\ell-1}}}"', from=3-4, to=1-4]
	\arrow[from=3-4, to=3-5]
	\arrow["{{\vt_\ell}}"', from=3-5, to=1-5]
\end{tikzcd}\]
    where $\vt_i$ is the basic-integer direction of integer part $\vz$ and basis $B_i$, and $\vx_i$ is the basic-integer solution $\vx'_i - \vt_i$.

    We know that every movement $\vs_i$ takes out a negative component of the solution and it introduces a non negative new one. Let us say, without loss of generality, that $\vs_i$ takes out the $i_-$ component and introduces the $i_+$-th one. As we assumed that all basic-integer solutions are non-degenerated, we can also assume that $\vs_i$ makes the $i_+$ coordinate positive. 
    
    With all this in mind, $(\vx_0')_{0_-}$ is negative, but $\vx_0$ is feasible so the direction $\vt_0$ must have a negative $0_-$-component. Therefore, by continuity, there exists an $\alpha_0 \in [0,1]$ such that $$\vx_{(0,1)}=\vx_0 + \alpha_0 \vt_0$$
    has a zero in the $0_-$-component. As $\vx_{(0,1)}$ has a new zero, it is a degenerated point (if it is a basic-integer solution, either $\vx_{(0,1)}$ is $\vx_0$ or $\bar \vx$ and we can move on). Here, $\vx_{(0,1)} + (1-\alpha_0) \vt_1$ is a basic-integer solution of basis $B_1$ (as the $0_-$-component of $\vt_1$ is zero) and integer part $\vx_0^I + \vg$, i.e., $$\vx_{(0,1)} + (1-\alpha_0) \vt_1= \vx'_1.$$
    However, $\vx_{(0,1)}$ has a zero in the $0_-$ and $0_+$-component, thus, as $\vx'_1$ has a positive $0_+$-component, $\vt_1$ also has that coordinate positive (again, we are excluding non-degenerated basic-integer solutions). Therefore, $\vx_1= \vx_{(0,1)}-\alpha \vt_1$ is unfeasible.

    We reproduce this procedure for the pairs $\vx_{(i-1,i)}, \vx_{i+1}'$ until $i=\ell$. This gives a sequence of degenerated points $\vx_{(i-1, i)}$ that can be constructed recurrently. In the final step, we have written $\bar \vx$ as a sum of $\vx_0$ and all the directions $\vt_i$. This way,
    $$\vt= \alpha_0 \vt_0 + ... + \alpha_{\ell} \vt_\ell$$
    where $\alpha_0+ ... + \alpha_\ell=1$.
    
\end{proof}
The conclusion of this lemma is that whenever we need to use a combination of directions of our double test set there is at least a degenerated point.

\begin{example}
    In example \ref{ex: lone_basis} we saw that we needed the direction $(-1,1,1)^T$ to keep improving. This direction is a combination of the basic-integer direction $\vt_1=(-2,0,1)^T$ or 
    $ \vt_2= (0,2,1)^T$ and the padded circuit $\vs=(1,1,0)^T$. We can see that $$\vt= \frac{1}{2} (-2,0,1)^T+ \frac{1}{2}(0,2,1)^T$$ and the degenerated point between both basis is $\vx_{(1,2)}=(0,0,3/2)^T$. \label{ex: lone_basis2}
\end{example}
\begin{remark}
    Lemma \ref{L: CombConv} tells us that the basis $B_i$ share $m-1$ columns with the basis $B_{i-1}$ and $B_{i+1}$, for any $i$. We say that a pair of bases sharing $m-1$ columns are adjacent (or neighbors). Each degenerated point is represented by using such a pair of basis. However, we note in passing that a degenerated point can be written in $n_R-(m-1)$ bases. Here, we are naming it using the subindices of only two of them, as these bases are the ones that provide us the basic-integer directions $\vt_i$ and $\vt_j$ that move us in and out of the degenerated point.
\end{remark}

These solutions can be characterized algebraically. Given two adjacent bases, $B_i$ and $B_j$, we can find a degenerated point in those two bases if and only if there exists $\vx^I \in \R^{n_I}$ such that:
$$ \left[ \begin{array}{c}
      \B_i^{-1} (\vb - A^I \vx^I) \\
       \zero
    \end{array} \right]
    = \left[ \begin{array}{c}
       \B_j^{-1} (\vb - A^I \vx^I) \\
       \zero
    \end{array} \right]$$
where $\B^{-1}_i$ and $\B^{-1}_j$ are $(m+1)\times m$ matrices formed using $B^{-1}_i$ and $B^{-1}_j$, respectively, adding a zero row corresponding to the column that does not share with the other basis.

We can rewrite this as a system on $\vb$ and $\vx^I$: 
\begin{equation}\tag{$\pi_{(i,j)}$} 
    \left[ \begin{array}{c | c}
     \B_i^{-1} - \B_j^{-1} & -\B_i^{-1} A^I + \B_j^{-1} A^I 
\end{array} \right]
\left[ \begin{array}{c}
     \vb \\
     \vx^I
\end{array} \right]  = \zero.\label{eq: Sistema H} 
\end{equation}

Let $H_{(i,j)}$ be the $(m+1)\times (m + n_I)$ matrix defining the previous system. It always has the zero solution. For non-trivial solutions to exist, the rank of $H_{(i,j)}$ must not be maximum. We can check that the rank of the matrix is the rank of $\B_i^{-1}-\B_j^{-1}$, which is at most $m$. Now, we are interested on the solutions that have $\vb$ integer and $\vx^I$ positive. We may assume that $\vx^I \not \in \Z^{n_I}$. These solutions define a set that we call $\Pi_{(i,j)}$. 

Let us get back to the set up described at the beginning of the section. In order to avoid confusion, we consider the right hand side vector $\bar \vb$ to be fixed, transforming $\pi_{i,j}$ into
\begin{equation}
\tag{$\pi_{(i,j)}(\bar \vb)$}
    (\B^{-1}_i - \B^{-1}_j)A^I \vx^I = 
    (\B^{-1}_i -\B^{-1}_j) \bar \vb, \label{eq: Sistema pi b}
\end{equation}
where we call $\Pi_{(i,j)}(\vb)$ to the set of solutions with $\vx^I \in \R^{n_I}_{\geq 0}$.

\begin{example}
    Let us take a look back at example \ref{ex:unpointed}:
    $$\{\min \vc^T \vx ~|~ x_1 - x_2 + 2 x_3 + 2x_4 = 1, x_1, x_2 \in \R_+, x_3, x_4 \in \Z_+\}.$$ 
    The pair $(\T^{A,b}, \C(A^R))$ where 
    $$\T^{A,b}= \big \{ \pm(-2, 0, 1, 0)^T, \pm(-2, 0, 0, 1)^T,
       \pm (0, 2, 1, 0)^T, \pm(0, 2, 0, 1)^T, \pm(0, 0, 1, -1)^T\big \}$$
    and $C(A^R)= \{(1,1,0,0)^T\}$ is a double test set.
    Now, in order to move from $\vx_0=(1,0,0,0)^T$ to $\vx_{1,0}= (0,1,1,0)^T$ or $\vx_{0,1}=(0,1,0,1)^T$ we need the double augmentation directions from the double test set $$\vx_{1,0}- \vx_0= (-1,1,1,0)^T= (-2,0,1,0)^T+ (1,1,0,0)^T$$ or
    $$\vx_{0,1}- \vx_0 =(-1,1,0,1) ^T= (-2,0,0,1)^T+ (1,1,0,0)^T.$$
    Lemma \ref{L: CombConv} tells us that there are degenerated points, we calculate them. In this particular example, $\bar b=1$, $B_0=1$ and $B_1=-1$, hence 
    $$\B_0^{-1} = \left( \begin{array}{c}
        1  \\
        0  
    \end{array} \right) \quad \text {and} \quad 
    \B_1^{-1} = \left( \begin{array}{c}
        0  \\
        -1  
    \end{array} \right). $$
    This way, our matrix $H_{(0,1)}$ is
    $$\left[ \begin{array}{c | c}
     \B_0^{-1} - \B_1^{-1} & -\B_0^{-1} A^I + \B_1^{-1} A^I 
    \end{array} \right] = 
    \left( \begin{array}{c | c c}
         1 &-2 &-2 \\
         1 &  -2 &  -2
    \end{array} \right)$$
    providing the system $\pi_{(0,1)}(1)$
    $$
    \left( \begin{array}{cc}
         2& 2 \\
          2& 2
    \end{array} \right)
    \vx^I = \left(
    \begin{array}{c}
         1\\
         1
    \end{array} \right)$$
    which yields the linear subspace given by $$\Pi_{(0,1)}(1)=\{ (x_3, x_4) \in \R^2_{\geq 0} \, \, | \, \, x_3 + x_3 = 1/2\}.$$ \label{ex:unpointed2}
\end{example}

Now that we know how to calculate degenerated points, we shall use them to obtain improving directions. The idea is simple, we replicate the proof of Lemma \ref{L: CombConv}: Start with a basic-integer solution and search for all possible degenerate points that are ``nearby''. First, we define this idea of proximity:

 The reader may observe that $\alpha_0$ is actually dependent on $(\vx_0^I, \vx^I, \vg)$, although for the sake of simplicity, will be referred simply as $\alpha_0$.

\begin{definition}
    Let $\vx_0= (\vx_0^R, \vx_0^I)^T$ be any basic-integer solution and $\vg$ an integer part of $\phi(\gmip)$. Consider any other point $\vx= (\vx^R, \vx^I)^T$ such that $\vx_I = \alpha_0 \vg + \vx_0$ for $\alpha_0 \in [0,1]$. We call $\Adj(\vx, \vx_0, \vg)$ to the set of all degenerated solutions $\bar \vx= (\bar \vx^R, \bar \vx^I)^T$ such that the bases of $\bar \vx$ and $\vx$ are adjacent (or neighbor) and $\bar \vx^I = (\bar \alpha + \alpha_0) \vg + \vx_0^I$ with $\bar \alpha + \alpha_0 \in [0,1]$.\label{Def: Adj1}
\end{definition}

\begin{remark}
    Under this circumstances, there are three possible scenarios for $\vx$ depending on $\alpha_0$:
    \begin{enumerate}
        \item[·]$\alpha_0 =0$, hence $\vx=\vx_0$,
        \item[·]$\alpha_0=1$, therefore, $\vx$ is any basic-integer solution such that $\vx^I = \vx_0 + \vg$,
        \item[·] $\alpha_0 \in (0,1)$  and $\vx$ is any degenerated point that is close to $\vx_0$, i.e.,  $\vx= \alpha_0\vg + \vx_0^I.$
    \end{enumerate}
   In order to keep track of this distance, we define $\alpha( \vx^I, \vx_0, \vg)$ as $\alpha_0$.
\end{remark}

\begin{lemma}
  The set $\Adj(\vx^I, \vx_0^I, \vg)$ is either empty or finite. \label{L: Adj_es_Finito}
\end{lemma}
\begin{proof}
    Let $\alpha_0= \alpha(\vx^I, \vx_0^I, \vg)$. If $\alpha_0=1$, the set $\Adj(\vx, \vx_0, \vg)$ is empty. We consider $\alpha_0=0$ as if $\alpha_0\in (0,1)$, the following reasoning applies for each basis of $\vx^R$.
    
    Let $B_i:=B_0$ be the basis of $\vx_0$. In this set up, it is sufficient to show that for any adjacent basis, $B_j$, to the basis of $\vx_0$, there are a finite number of solutions $\bar \vx^I$ to \eqref{eq: Sistema pi b}, such that $\bar \vx^I$ can be written as $\bar \vx^I= \vx_0^I + \alpha \vg$, for $\alpha \in [0,1]$. 

    We assume that such a $\bar \vx^I$ exists. By definition, it is the intersection of the line $\vx_0^I + s \vg$ with the vectorial subspace of $\R^{n_I}$ given \eqref{eq: Sistema pi b}.
     
    Either this intersection is unique or it is the hole line. In order for the second scenario to happen, $(\B_0^{-1}-\B_j^{-1})A^I \vg $ must be $\zero$. Then, using the basic-integer direction of integer part $\vg$ and basis $B_0$ we can check that $\vx_0$ is also a degenerated solution. However, we assume at the beginning of the section that all basic-integer solutions would be non-degenerated.
\end{proof}

This way, when we reach a new degenerated point, we can either search for another one, if any , or go to a basic-integer solution using a basis of our last degenerated point. In order to do that, we need to ``complete'' the direction we have been using:
\begin{definition}
    Let $\vg$ be the integer part of a direction, and let 
    $\vt= (\vq_0, \vg_0)^T$ be another direction such that $\vg_0= \alpha \vg$ for $\alpha \in [0,1]$. Now, consider a basic-integer solution $\vx_0$. We denote $\Compl(\vt, \vg, \vx_0)$ as the set of directions $\vt +(1-\alpha)\vt_j$ where $\vt_j$ is any basic-integer direction using a basis of $\vx_0+ \vt$ and $\vg$. \label{Def: Comp1}
\end{definition}

\bigskip

In the following, we describe the procedure, Algorithm \ref{alg: sol_deg}, that given a non-optimal solution returns an improving direction. 

\begin{algorithm}[H]
    \caption{Completion Procedure: using degenerated points}\label{alg: sol_deg}
    \begin{algorithmic}[1]
        \State \textbf{input} $\vx_0=(\vx^R_0, \vx^I_0)^T$ a basic-integer feasible non optimal solution of \eqref{eq:standard} and the integer part $\vg$ of an improvement direction $\vt =(\vq, \vg)^T$ of $\vx_0$.
        \State \textbf{output} $T_0$, a set containing $(\vq, \vg)^T$.
        
        \State $C= \Adj(\vx_0, \vg )$
        \State $T_0 = \emptyset$
        \While{$C \neq \emptyset$}
            \State $(\vx^R, \vx^I)^T= \vx \gets $ an element in $C$
            \State $C \gets C \setminus \{\vx \}$
            \State $\alpha_0 = \alpha(\vx_0^I, \vx^I, \vg)$
            \If{$\alpha_0 \neq 1$}:
                \State $\vt = \vx - \vx_0$
                \State $T_0 \gets T_0 \cup \Compl(\vt, \alpha_0, \vx)$
                \State $C \gets C \cup \Adj(\vx, \vx_0, \vg)$
            \EndIf
        \EndWhile
    \end{algorithmic}
\end{algorithm} 

Algorithm \ref{alg: sol_deg} terminates as the number of degenerated points is finite. Moreover, the output of the algorithm, $T_0$, satisfies that it contains $\vt=(\vq,\vg)^T$ due to Lemma \ref{L: CombConv}.

The following example illustrates the application of Algorithm \ref{alg: sol_deg}.
\begin{example}
     [Continuation of Example \ref{ex:unpointed2}] We have already computed, $$\Pi_{(0,1)}(1)=\{ (\vx_3, \vx_4) \in \R^2_{\geq 0} \, \, | \, \, x_3 + x_4 = 1/2\},
    $$ and we know that
    $(1,0)^T, (0,1)^T \in \phi(\gmip(A))$.

    We start with $\vg_0=(1,0)^T$. In order to move out of $\vx_0=(1,0,0,0)^T$ using $\vt_0=(2,0,1,0)^T$, we must search for an $\alpha_0$ such that $\vx_0 + \alpha_0 \vg_0= (\alpha_0,0)$ belongs to $\Pi_{(0,1)}(1)$. This way
    $$\alpha_0+0=1/2.$$
    In conclusion, the direction we are looking for is given by $\vt= \frac{1}{2}\vt_0+ \frac{1}{2}\vt_1$ where $\vt_0=(0,-2,1,0)^T$ is the basic-integer direction of basis $B_1$ and integer part $\vg_0$. For $\vg_2$ the result is analogue.
\end{example}
\subsection{Universality}
If we perform a detailed analysis of Algorithm \ref{alg: sol_deg} and Lemma \ref{L: CombConv}, we observe that we can reformulate the algorithm starting with a degenerated point $\vx= (\vx^R, \vx^I)$. In order to obtain a direction using $\vx$, assuming that there are not any more degenerated points in between, there must be an integer part $\vg \in \phi(\gmip(A))$ such that there exist two solutions $\vx_i, \vx_j$, verifying $\vx_i^I = \vx + \alpha \vg, \vx_j^I = \vx -(1- \alpha) \vg,$ with  $\alpha \in [0,1]$. In this scenario, the direction would be $\alpha \vt_i + (1-\alpha) \vt_j$ where $\vt_i$ and $\vt_j$ are basic-integer directions of integer part $\vg$ and the basis of $\vx_i$ and $\vx_j$. Here, the right hand side vector does not appear explicitly, but it is necessary to calculate $\alpha$. However, for fixed $\vg$, there are a finite number of possible values  for the parameter $\alpha$.

\begin{example}
    Consider the problem $$\{ \vc^T \vx ~|~ x_1 - x_2 + 2 x_3 = b, x_1,~ x_2 \in \R_+, ~ x_3 \in \Z_+\},$$
    which is a generalization of Example \ref{ex: lone_basis}. Here there are only two bases, $B_1=1$ and $B_2=-1$, and $\Pi_{(1,2)}$ can be easily calculated: 
    $$\Pi_{(1,2)}= \{ (b, \frac{b}{2}) \, \, | \, \,   b \in \Z_{\geq 0} \}.$$
    As we have already seen, $\G^{A}= \{ \pm 1 \}$, so we consider $\vg=1$. Then, in order to build the direction $\vt= \alpha \vt_1 + \beta \vt_2$ where $\vt_1$ and $\vt_2$ are the basic-integer directions of integer part $\vg$ and basis $B_1$ and $B_2$, respectively, we need to find an $\alpha \in [0,1]$ such that $\alpha \vg + \vx^I_{(1,2)} \in \Z^{n_I}$. In this case, $\vg=1$ and $\vx^I_{(1,2)} = b/2$, so $\alpha = 1/2$ (if $b$ is an even number, the integer part of the degenerated point is actually integer) and the direction $\vt$ is $(-1,1,1)^T$, as specified earlier.

    In conclusion, for this example, we have proven that the set $\T^{A} \cup \{(-1,1,1)^T \} \cup \C(A^R)$ is a finite universal test set for $A$ since the direction $(-1,1,1)^T$ is completely independent of the choice of $\vb$.
\end{example}

This example shows that, although $\vb$ and the integer part of the degenerated points are linearly related, the coefficient $\alpha$ that gives the improving direction only depends on the fractional component of the ``integer'' part of the degenerated point. 

\begin{definition}
    Given $\vg \in \phi (\gmip(A))$, we call $I_\vg$ to the set of all the coefficients $\alpha \in [0,1]$ verifying that there exist $\vx^I \in \R^{n_I}, \vb \in \Z^m$ and subindices $i,j$ such that $(\vb, \vx^I) \in \Pi_{(i,j)}$ satisfying $\vx^I- \alpha \vg \in \Z^{n_I}$.
\end{definition}

\begin{lemma}
    The set  $I_\vg$ is finite. \label{L: Ig_finite}
\end{lemma}
\begin{proof}
    We show that, for a given pair of bases $B_i$ and $B_j$, the set formed by all $\alpha \in [0,1]$ such that $\vx^I- \alpha \vg \in \Z^{n_I}$ for any solution $(\vb, \vx^I) \in \Pi_{(i,j)}$ is finite. 
    Consider any solution of $(\vb, \vx^I)$ in $\Pi_{(i,j)}$ such that $\vx_0^I=\vx^I -\alpha  \vg \in \Z^{n_I}$ for some $\alpha \in [0,1]$. We know that $(\vb,\vx^I)$ satisfies 
    $$(\B^{-1}_i -\B^{-1}_j)  \vb = (\B^{-1}_i - \B^{-1}_j)A^I \vx^I,$$
    so 
    $$(\B^{-1}_i -\B^{-1}_j)  \vb = (\B^{-1}_i - \B^{-1}_j)A^I (\vx_0^I + \alpha \vg),$$
    and we end up with the following expression:
    $$(\B^{-1}_i -\B^{-1}_j)(\vb-A^I \vx_0^I) =\alpha  (\B^{-1}_i - \B^{-1}_j)A^I \vg.$$
    Now, $(\B^{-1}_i - \B^{-1}_j)A^I \vg$ is a vector of $m+1$ coordinates. Let $k$ be the index of any of these coordinates that is not equal to zero. If $(\B^{-1}_i - \B^{-1}_j)A^I \vg= \zero$ then, as in the proof of Lemma \ref{L: Adj_es_Finito}, $\vx$ is a degenerated solution, by hypothesis, this scenario is not possible. With this in mind,
    \begin{equation}
        \alpha = \frac{[(\B^{-1}_i - \B^{-1}_j)(\vb-A^I\vx_0^I)]_k}{[(\B^{-1}_i - \B^{-1}_j)A^I \vg]_k}. \label{eq: alfa_explicito}
    \end{equation}
    In the expression above, both $\vb$ and $\vx_0^I$ have integer components. Therefore, we can rationalize the rest of coefficients appearing in the numerator and denominator and $\alpha$ results in a quotient of integer numbers.  Let $D_k$ be the integer on the denominator and $f_k$ the linear transformation of integer coefficients that, given $\vb$ and $\vx_0^I$, returns an integer that is the numerator, therefore,
    \begin{equation}
       \alpha = \frac{f_k(\vb, \vx_0^I)}{D_k}. \label{eq: alfa_simplificado}
    \end{equation}
    Note that $f_k$ and $D_k$ depend on the bases $B_i$ and $B_j$, however, for the sake of simplicity and whenever it does not induce any confusion, we do not write these dependency explicitly.
    
    To sum up, any $\alpha \in I_{\vg}$ can be written as the ratio of a certain linear transformation, $f_k$, evaluated at a particular point $(\vb, \vx_0^I)$ and a certain integer number $D_k$. As $\alpha \in [0,1]$, there are at most $D_k +1$ possible $\alpha$ values. Those values can be written as $\ell$ divided by $D_k$ with $\ell \in \{0, ..., D_k\}$.
\end{proof}

\begin{remark}
    In order to obtain $I_\vg$ we need to tune a little bit the previous proof: $\alpha \in I_\vg$ if and only if there exists a pair of bases $B_i, B_j$ such that for all $k$ verifying $[(\B_i^{-1}- \B_j^{-1})A^I \vg]_k \neq 0$, if we calculate $D_k$ we can write $\alpha$ as $ \ell_k/ D_k$ for certain $\ell_k \in \{0, ..., D_k$\}.
\end{remark}

\begin{remark}
    The set $I_g$ is partially-ordered. Note that each element $\alpha $ of $I_\vg$ has associated (at least) a pair of bases. We write $\alpha_{i,j}$ where $i,j$ are the subindices of these bases. 
\end{remark}

We want to replicate Algorithm \ref{alg: sol_deg}. We start at $\alpha=0$, and we choose a path of $\alpha_{i,j} \in I_\vg$ such that each $\alpha_{i,j}$ is adjacent to the previous one. This way, starting with $\vt= \zero$, every time we move from some $\alpha_{i,j}$ to $\alpha_{i',j'}$ we add $(\alpha_{i',j'}-\alpha_{i,j})\vt_k$ to $\vt$, where $\vt_k$ is the basic-integer direction of integer part $\vg$ and basis $B_k$, the basis that both $\alpha$ have in common.

We need to adjust the definitions \ref{Def: Adj1} and \ref{Def: Comp1} to this set-up:

\begin{definition}
    Given an integer part $\vg$, two bases $B_i, B_j$ and a coefficient $\alpha_{i,j} \in I_\vg$, let  $\Adj(\alpha_{i,j}, \vg)$ be  the set  of all $\alpha_{i', j'} \in I_{\vg}$ where $\alpha_{i',j'}>\alpha_{i,j}$ and one of the two bases defining $\alpha_{i', j'}$ is adjacent to one of the bases defining $\alpha_{i,j}$. If $\alpha_{i,j}=0$, we consider that it can be written in any basis, so all $\alpha' \in I_\vg$ are adjacent.\label{Def: Adj2}
\end{definition}

In addition, in a similar way as we have done with $\alpha=0$, we add $\{1\}$ to this set, as the final augmentation has $\alpha=1$ to be the end point of all paths and we consider that it can be written in any basis. 

\begin{definition}
    Given an integer part $\vg$, a direction $\vt$, a pair of bases $B_i, B_j$ and a coefficient $\alpha_{i,j} \in I_\vg$,  $\Compl(\vt, \alpha_{i,j}, \vg)$ is  the set of all pairs $$(\alpha_{i', j'}, \vt + (\alpha_{i',j'}- \alpha_{i,j}) \vt_k)$$ where $\alpha_{i',j'} \in \Adj(\alpha_{i,j}, \vg)$ and $\vt_k$ is the basic-integer direction of integer part $\vg$ and the common basis shared in the definition of $\alpha_{i,j}$ and $\alpha_{i',j'}$.
\end{definition}

\begin{algorithm}[ht]
    \caption{Completion Procedure: Using fractional parts}\label{alg: sol_frac}
    \begin{algorithmic}[1]
        \State \textbf{input} $\vg \in \phi(\gmip(A))$ and  $I_\vg$ .
        \State \textbf{output} $T_\vg$, a finite set containing directions of integer part $\vg$.
        
        \State $T_\vg= \emptyset$
        \State $C = \{(0, \zero)\}$
        \While {$C\neq \emptyset $}
            \State $(\alpha_{i,j}, \vt) \gets $ an element in $C$
            \State $C \setminus \{(\alpha_{i,j}, \vt)$ \}
            \If{$\alpha_{i,j}=1$}
                \State $T_\vg = T_\vg \cup \{\vt\}$
            \Else
            \State $C \gets$ $C \cup \Compl(\vt, \alpha_{i,j}, \vg)$
            \EndIf
        \EndWhile
    \end{algorithmic}
\end{algorithm}
The algorithm \ref{alg: sol_frac} ends since the set $I_\vg$ is finite and whenever we have an $\alpha$ the adjacent ones are strictly bigger.

In order to proceed further, we denote by
$T_\vg$ the output of Algorithm $\ref{alg: sol_frac}$.

\begin{definition}
    The set $\T(A^R, A^I)$ (or simply $\T(A)$) is the union of all sets $T_\vg$ for $\vg \in \phi(\gmip(A))$ and the set of padded circuits $\C(A^R)$.
\end{definition}

\begin{theorem}
    The set $\T(A)$ is a \concept{finite universal test set} for any MIP program with matrix $A=(A^R | A^I)$.
\end{theorem}
\begin{proof}
    In order to prove this theorem we have to show that given a specific \ref{eq:standard} program with matrix $A$ and a solution $\vx=(\vx^R, \vx^I)^T$, if $\vx$ is not an optimal solution, we are able to find a direction $\vt \in \T(A)$ such that $\vx+\vt$ is feasible and $\vc^T \vt < 0$.

    First of all, without loss of generality, we  can assume that $\vx$ is a non-optimal basic-integer solution. Otherwise, we can use the padded circuits to find an improvement that is basic-integer, as we know that there is an optimal solutions of $\eqref{eq:slice}$ which is basic-integer. 
    Let  $B_0$ be the basis that defines $\vx$ (in case of having more than one basis, i.e, being degenerated, we check for all of them). As $\gmip(A)$ is a universal test set, there exists a direction $\vt \in \gmip(A)$ such that $\vx + \vt$ is feasible and an improvement over $\vx$. Again, without loss of generality, we suppose $\vx + \vt$ to be a basic-integer direction (if it is not, we add circuits until it is). We must show that $\vt \in \T(A)$.

    Let $\vt$ be the direction $(\vq, \vg)^T$. By Lemma \ref{L: CombConv} this direction can be written as a convex combination
    $$\vt= \alpha_0 \vt_0+ ... + \alpha_\ell \vt_\ell$$
    of some basic-integer directions of integer part $\vg$ and  bases adjacent to the previous one, being  $B_0$ the basis of $\vt_0$. This way, we know that there are $\ell-1$ degenerated points, namely
    $$\vx_{(k-1,k)}= \vx + \sum_{i=0}^k \alpha_i \vt_i \quad \text{ for } k \in \{1,...,\ell-1 \},$$
    and each $\hat \alpha_k = \sum_{i=0}^k \alpha_i$ for $k \in \{1,...,\ell-1 \}$ belongs to $I_\vg$. Finally, we apply Algorithm \ref{alg: sol_frac} running over $I_\vg$  starting with $\hat \alpha_0= \alpha_0$. The algorithm evaluates all possible combinations including the one using the scalars $\hat \alpha_0, ..., \hat \alpha_\ell$, that generates the direction $\vt$. This proves that $\vt \in T_\vg$, and therefore $\vt \in \T(A)$.
\end{proof}
We finish this section providing a universal test set for the matrix $A=[1,-1 |2, 2]$ of example \ref{ex:unpointed}.
\begin{example}
    Given the matrix $A=[1,-1 |\, 2, 2]$ we know that 
   $$\phi(\gmip) \subset \G^*= \big \{\pm(1,0)^T, \pm(0,1)^T, \pm(1,-1)^T \big \}.$$
    First, we calculate $I_\vg$ for any $\vg \in \phi(\gmip)$ using the proof of lemma \ref{L: Ig_finite}. It is particularly easy to compute as there are only a couple of basis, $B_0=1$ and $B_1$. Thanks to \ref{ex:unpointed2} we have:
    $$H_{(0,1)}= (\B_0^{-1}-\B_1^{-1} | -(\B_0^{-1}-\B_1^{-1})A^I) )= \left( \begin{array}{c | c c}
         1 &-2 &-2 \\
         1 &  -2 &  -2
    \end{array} \right).$$
    Now, we consider $\vg_0= (0,1)^T$ and we multiply $-(\B_0^{-1}-\B_1^{-1})A^I)$ by $\vg_0$:
    \[\left( \begin{array}{c c}
         -2&-2 \\
         -2&-2 
    \end{array} \right) 
    \left( \begin{array}{c}
         0 \\
          1
    \end{array} \right)
    = \left( \begin{array}{c}
         -2 \\
         -2  
    \end{array} \right). \]
    We can see that both components of the vector are non zero. At this point we have to write the numerator 
    $$(\B_0^{-1}-\B_1^{-1})(\vb - A^I \vx_0^I)$$ as a vector of rational components independent of the choice of $b$ and $\vx_0^I$:
    \[ \left( \begin{array}{c}
         1  \\
         1 
    \end{array} \right) \left[
    b
    - \left(1,1 \right)    
    \left( \begin{array}{c}
         (\vx_0^I)_1  \\
         (\vx_0^I)_2
    \end{array} \right) \right] =
    \left( \begin{array}{c}
          b-(\vx_0^I)_1-(\vx_0^I)_2 \\
          b-(\vx_0^I)_1-(\vx_0^I)_2
    \end{array} \right).
    \]
    Therefore, expression \eqref{eq: alfa_explicito} and \eqref{eq: alfa_simplificado} are
    $$\alpha = \frac{b-(\vx^I_0)_1- (\vx_0^I)_2}{2}$$
    and $I_{\vg_0}= \{0, \frac{1}{2}, 1 \}$. Running the algorithm \eqref{alg: sol_frac} over this set we get the set
    $$T_{\vg_0}= \{(-2,0,0,1)^T, (0,2,0,1) ^T, (-1,1,0,1) ^T \}.$$

    If we repeat this calculations for the rest of the elements in $\G^*$, we get that a universal test set for the matrix $A$ is
    \begin{equation*}
    \T = \left\{
    \begin{array}{c}
       \pm (-2,0,0,1)^T, \; \pm (0,2,0,1)^T, \; \pm (-1,1,0,1)^T, \\
       \pm (-1,1,1,0)^T, \; \pm (0,2,1,0)^T, \; \pm (-2,0,1,0)^T, \\
       \pm (0,0,1,-1)^T
    \end{array}
    \right\}.
    \end{equation*}
\end{example}

The interested reader is referred to \href{https://github.com/01Jara/MIPS}{https://github.com/01Jara/MIPS} where an implementation of the algorithms in the paper can be found. The authors have used these codes to solve the examples in the paper. Although the methodology is valid and correct, its bottleneck is the computation of the Hilbert bases and the huge size of the resulting test sets. To illustrate the complexity of this procedure, in some of our computational experiments for MIP problems with 2 constraints and 6 variables, two integer, the size of the set $\T$ was greater than 80.000. This makes our approach not effective for solving actual MIP, but it provides a certificate of the existence of finite, universal test sets for general MIP.

%% Conclusions %%%%%%%%%%%%%%%%%%%%%%%%%%%%%%%%%%%%
\section{ Concluding remarks} \label{se:conclusions}

This paper introduces the concept of double test set and proves the existence of  such a set for MIP. Although, it does not directly  provides a standard finite test set, from combination of elements from the two sets, one can further exploit that structure to prove and algorithmically construct  a standard finite test set for MIP. These sets are both used to develop augmentation algorithms for MIP.

Many questions have emerged from our algebraic analysis of MIP. For some of them, we have provided positive or negative answers. However, even in the positive case, many challenges still remain.
We mention two of them. Our positive answer to the existence of computable test sets for MIP brings some interesting new challenges, one of them is the identification of a suitable (finite) augmentation scheme able to be competitive in solving mixed integer problems. In addition, it may be interesting to use mixed approaches combining augmentation techniques with some other solution strategies for MIPs, may be alternating different steps in the algorithms. As far as we are concerned, this has not been explored and may be the subject of a follow up paper.

\section*{Declarations}

This research has been partially supported by the grants PID2020-114594GB-C21 funded by MICIU/AEI /10.13039/ 501100011033, PCI2024-155024-2 - ``Optimization over Nonlinear Model Spaces: Where Discrete Meets Continuous Optimization'' funded by Agencia Estatal de Investigación and EU funds; and IMUS-Maria de Maeztu grant CEX2024-001517-M - Apoyo a Unidades de Excelencia María de Maeztu.

\noindent
%%===========================================================================================%%

\bibliography{sn-bibliography.bib}% common bib file

%% BioMed_Central_Bib_Style_v1.01

\begin{thebibliography}{11}
% BibTex style file: bmc-mathphys.bst (version 2.1), 2014-07-24
\ifx \bisbn   \undefined \def \bisbn  #1{ISBN #1}\fi
\ifx \binits  \undefined \def \binits#1{#1}\fi
\ifx \bauthor  \undefined \def \bauthor#1{#1}\fi
\ifx \batitle  \undefined \def \batitle#1{#1}\fi
\ifx \bjtitle  \undefined \def \bjtitle#1{#1}\fi
\ifx \bvolume  \undefined \def \bvolume#1{\textbf{#1}}\fi
\ifx \byear  \undefined \def \byear#1{#1}\fi
\ifx \bissue  \undefined \def \bissue#1{#1}\fi
\ifx \bfpage  \undefined \def \bfpage#1{#1}\fi
\ifx \blpage  \undefined \def \blpage #1{#1}\fi
\ifx \burl  \undefined \def \burl#1{\textsf{#1}}\fi
\ifx \doiurl  \undefined \def \doiurl#1{\url{https://doi.org/#1}}\fi
\ifx \betal  \undefined \def \betal{\textit{et al.}}\fi
\ifx \binstitute  \undefined \def \binstitute#1{#1}\fi
\ifx \binstitutionaled  \undefined \def \binstitutionaled#1{#1}\fi
\ifx \bctitle  \undefined \def \bctitle#1{#1}\fi
\ifx \beditor  \undefined \def \beditor#1{#1}\fi
\ifx \bpublisher  \undefined \def \bpublisher#1{#1}\fi
\ifx \bbtitle  \undefined \def \bbtitle#1{#1}\fi
\ifx \bedition  \undefined \def \bedition#1{#1}\fi
\ifx \bseriesno  \undefined \def \bseriesno#1{#1}\fi
\ifx \blocation  \undefined \def \blocation#1{#1}\fi
\ifx \bsertitle  \undefined \def \bsertitle#1{#1}\fi
\ifx \bsnm \undefined \def \bsnm#1{#1}\fi
\ifx \bsuffix \undefined \def \bsuffix#1{#1}\fi
\ifx \bparticle \undefined \def \bparticle#1{#1}\fi
\ifx \barticle \undefined \def \barticle#1{#1}\fi
\bibcommenthead
\ifx \bconfdate \undefined \def \bconfdate #1{#1}\fi
\ifx \botherref \undefined \def \botherref #1{#1}\fi
\ifx \url \undefined \def \url#1{\textsf{#1}}\fi
\ifx \bchapter \undefined \def \bchapter#1{#1}\fi
\ifx \bbook \undefined \def \bbook#1{#1}\fi
\ifx \bcomment \undefined \def \bcomment#1{#1}\fi
\ifx \oauthor \undefined \def \oauthor#1{#1}\fi
\ifx \citeauthoryear \undefined \def \citeauthoryear#1{#1}\fi
\ifx \endbibitem  \undefined \def \endbibitem {}\fi
\ifx \bconflocation  \undefined \def \bconflocation#1{#1}\fi
\ifx \arxivurl  \undefined \def \arxivurl#1{\textsf{#1}}\fi
\csname PreBibitemsHook\endcsname

%%% 1
\bibitem[\protect\citeauthoryear{De~Loera et~al.}{2019}]{dLKS19}
\begin{botherref}
\oauthor{\bsnm{De~Loera}, \binits{J.A.}},
\oauthor{\bsnm{Kafer}, \binits{S.}},
\oauthor{\bsnm{Sanit\`{a}}, \binits{L.}}:
Pivot Rules for Circuit-Augmentation Algorithms in Linear Optimization.
eprint arXiv:1909.12863
(2019).
\url{http://arxiv.org/abs/1909.12863}
\end{botherref}
\endbibitem

%%% 2
\bibitem[\protect\citeauthoryear{De~Loera et~al.}{2013}]{dLHK13}
\begin{bbook}
\bauthor{\bsnm{De~Loera}, \binits{J.A.}},
\bauthor{\bsnm{Hemmecke}, \binits{R.}},
\bauthor{\bsnm{K\"{o}ppe}, \binits{M.}}:
\bbtitle{Algebraic and Geometric Ideas in the Theory of Discrete Optimization}.
\bsertitle{MOS-SIAM Series on Optimization},
vol. \bseriesno{14}.
\bpublisher{Society for Industrial and Applied Mathematics (SIAM),
  Philadelphia, PA; Mathematical Optimization Society, Philadelphia, PA},
  \blocation{???}
(\byear{2013})
\end{bbook}
\endbibitem

%%% 3
\bibitem[\protect\citeauthoryear{Hemmecke}{2003}]{Hem03}
\begin{barticle}
\bauthor{\bsnm{Hemmecke}, \binits{R.}}:
\batitle{On the positive sum property and the computation of {G}raver test
  sets}.
\bjtitle{Math. Program.}
\bvolume{96}(\bissue{2, Ser. B}),
\bfpage{247}--\blpage{269}
(\byear{2003}).
\bcomment{Algebraic and geometric methods in discrete optimization}
\end{barticle}
\endbibitem

%%% 4
\bibitem[\protect\citeauthoryear{De~Loera et~al.}{2015}]{dLHL15}
\begin{barticle}
\bauthor{\bsnm{De~Loera}, \binits{J.A.}},
\bauthor{\bsnm{Hemmecke}, \binits{R.}},
\bauthor{\bsnm{Lee}, \binits{J.}}:
\batitle{On augmentation algorithms for linear and integer-linear programming:
  from {E}dmonds-{K}arp to {B}land and beyond}.
\bjtitle{SIAM J. Optim.}
\bvolume{25}(\bissue{4}),
\bfpage{2494}--\blpage{2511}
(\byear{2015})
\end{barticle}
\endbibitem

%%% 5
\bibitem[\protect\citeauthoryear{K\"{o}ppe and Weismantel}{2003}]{KW03}
\begin{barticle}
\bauthor{\bsnm{K\"{o}ppe}, \binits{M.}},
\bauthor{\bsnm{Weismantel}, \binits{R.}}:
\batitle{An algorithm for mixed integer optimization}.
\bjtitle{Math. Program.}
\bvolume{98}(\bissue{1-3, Ser. B}),
\bfpage{281}--\blpage{307}
(\byear{2003}).
\bcomment{Integer programming (Pittsburgh, PA, 2002)}
\end{barticle}
\endbibitem

%%% 6
\bibitem[\protect\citeauthoryear{Henk et~al.}{2003}]{HKW}
\begin{barticle}
\bauthor{\bsnm{Henk}, \binits{M.}},
\bauthor{\bsnm{K{\"o}ppe}, \binits{M.}},
\bauthor{\bsnm{Weismantel}, \binits{R.}}:
\batitle{Integral decomposition of polyhedra and some applications in mixed
  integer programming}.
\bjtitle{Mathematical programming}
\bvolume{94},
\bfpage{193}--\blpage{206}
(\byear{2003})
\end{barticle}
\endbibitem

%%% 7
\bibitem[\protect\citeauthoryear{Le~Bodic et~al.}{2018}]{lBPPP18}
\begin{barticle}
\bauthor{\bsnm{Le~Bodic}, \binits{P.}},
\bauthor{\bsnm{Pavelka}, \binits{J.W.}},
\bauthor{\bsnm{Pfetsch}, \binits{M.E.}},
\bauthor{\bsnm{Pokutta}, \binits{S.}}:
\batitle{Solving {MIP}s via scaling-based augmentation}.
\bjtitle{Discrete Optim.}
\bvolume{27},
\bfpage{1}--\blpage{25}
(\byear{2018})
\end{barticle}
\endbibitem

%%% 8
\bibitem[\protect\citeauthoryear{Lee et~al.}{2020}]{LPSX20}
\begin{botherref}
\oauthor{\bsnm{Lee}, \binits{J.}},
\oauthor{\bsnm{Paat}, \binits{J.}},
\oauthor{\bsnm{Stallknecht}, \binits{I.}},
\oauthor{\bsnm{Xu}, \binits{L.}}:
Improving proximity bounds using sparsity.
Preprint available on arXiv
(2020).
\url{https://arxiv.org/abs/2001.04659v1}
\end{botherref}
\endbibitem

%%% 9
\bibitem[\protect\citeauthoryear{Paat et~al.}{2020}]{PWW20}
\begin{barticle}
\bauthor{\bsnm{Paat}, \binits{J.}},
\bauthor{\bsnm{Weismantel}, \binits{R.}},
\bauthor{\bsnm{Weltge}, \binits{S.}}:
\batitle{Distances between optimal solutions of mixed-integer programs}.
\bjtitle{Math. Program.}
\bvolume{179}(\bissue{1-2, Ser. A}),
\bfpage{455}--\blpage{468}
(\byear{2020})
\end{barticle}
\endbibitem

%%% 10
\bibitem[\protect\citeauthoryear{Cook et~al.}{1986}]{CGST86}
\begin{barticle}
\bauthor{\bsnm{Cook}, \binits{W.}},
\bauthor{\bsnm{Gerards}, \binits{A.M.H.}},
\bauthor{\bsnm{Schrijver}, \binits{A.}},
\bauthor{\bsnm{Tardos}, \binits{E.}}:
\batitle{Sensitivity theorems in integer linear programming}.
\bjtitle{Math. Programming}
\bvolume{34}(\bissue{3}),
\bfpage{251}--\blpage{264}
(\byear{1986})
\end{barticle}
\endbibitem

%%% 11
\bibitem[\protect\citeauthoryear{Bazaraa et~al.}{2004}]{bazaraa04}
\begin{bbook}
\bauthor{\bsnm{Bazaraa}, \binits{M.S.}},
\bauthor{\bsnm{Jarvis}, \binits{J.J.}},
\bauthor{\bsnm{Sherali}, \binits{H.D.}}:
\bbtitle{Linear Programming and Network Flows}.
\bpublisher{Wiley-Interscience},
\blocation{USA}
(\byear{2004})
\end{bbook}
\endbibitem

\end{thebibliography}
%% if required, the content of .bbl file can be included here once bbl is generated
%%\input sn-article.bbl

\end{document}